\documentclass[11pt,a4paper]{amsart}
\usepackage[T1]{fontenc}
\usepackage[utf8]{inputenc}
\usepackage{lmodern}
\usepackage[margin=25mm]{geometry}
\usepackage{amsmath,amssymb,amsthm}
\usepackage{mathtools}
\usepackage{enumitem}
\usepackage{microtype}
\usepackage{xcolor}
\usepackage[colorlinks=true,linkcolor=blue!45!black,citecolor=blue!45!black,urlcolor=blue!45!black]{hyperref}

\hypersetup{
  pdftitle={Finite-Point Metrizable Coarsenings: Compatible Gauges, Simplicial Metrics, and Hausdorff Lower Bounds},
   pdfauthor={First Author Name; Second Author Name},
  pdfsubject={Finite-point localized coarsenings and the compactness--lattice dichotomy}
}

\newtheorem{theorem}{Theorem}[section]
\newtheorem{lemma}[theorem]{Lemma}
\newtheorem{proposition}[theorem]{Proposition}
\newtheorem{corollary}[theorem]{Corollary}
\theoremstyle{definition}
\newtheorem{definition}[theorem]{Definition}
\newtheorem{example}[theorem]{Example}
\theoremstyle{remark}
\newtheorem{remark}[theorem]{Remark}
\newcommand{\N}{\mathbb N}
\newcommand{\R}{\mathbb R}
\newcommand{\LF}{\mathcal L_F^m(\tau)}
\newcommand{\GF}{\mathcal G_F(\tau)}
\newcommand{\Del}{\Delta^{k-1}}
\setlist[enumerate,1]{label=\textup{(\roman*)},leftmargin=2.5em,itemsep=3pt,topsep=5pt}
\title[Finite-Point Metrizable Coarsenings]{Finite-Point Metrizable Coarsenings:\\
Compatible Gauges, Simplicial Metrics, and\\
Hausdorff Lower Bounds}

\author{Ahmad ja'afari kalvan$^{*}$}
\address{Department of Mathematics, Tarbiat Modares University, 14115-134, Tehran, Iran}
\email{ahmad.jaafari@modares.ac.ir}
\thanks{$^{*}$Corresponding author}

\author{Ehsan Shahoseini$^{1}$}
\address{School of Mathematics, Institute for Research in Fundamental Sciences (IPM), P.O. Box: 19395-5746, Tehran, Iran}
\email{shahoseini@ipm.ir}
\thanks{$^{1}$ The second author's research was supported by a grant from IPM.}

\date{}

\subjclass[2020]{Primary 54A10; Secondary 54E35, 54E50, 54D35,54D30}
\keywords{Coarser topology, finite-point localization, continuous gauge, simplex metric,
Hausdorff lower bound, lattice of topologies}

\begin{document}

\begin{abstract}
Let $(X,\tau)$ be metrizable and let $F=\{a_1,\ldots,a_k\}\subseteq X$, where
$2\le k<\infty$. We represent all metrizable topologies $\sigma\subseteq\tau$
agreeing with $\tau$ on $X\setminus F$ by compatible systems of continuous
gauges $s_i:X\to[0,1]$ with $s_i^{-1}(0)=\{a_i\}$.
The condition $\inf_X\max\{s_i,s_j\}>0$ for $i\ne j$ is equivalent to both
Hausdorffness and metrizability of the prescribed gauge topology.
A normalized product map into the standard simplex gives an explicit metric;
its triangle inequality follows from a simplex slack inequality.
This metric is complete whenever the auxiliary bounded compatible metric is complete.

For two compatible systems, their coordinatewise minimum describes the
intersection topology. It is compatible exactly when the two coarsenings
have a common Hausdorff lower bound; in that case the intersection is
metrizable and is their meet. Otherwise every common lower topology is
non-Hausdorff. A closed-discrete construction produces such an obstructed
pair for every noncompact metrizable space and every finite exceptional
set with at least two points. Consequently, for these exceptional sets,
the family is downward directed, or is a lattice, if and only if $(X,\tau)$
is compact, in which case it consists only of $\tau$.
\end{abstract}

\maketitle

\section{Introduction}
Let $(X,\tau)$ be metrizable and let $F=\{a_1,\ldots,a_k\}\subseteq X$ be finite.
We study the family $\LF$ of metrizable topologies $\sigma$ satisfying
\[
  \sigma\subseteq\tau,\qquad
  \sigma|_{X\setminus F}=\tau|_{X\setminus F}.
\]
For a single exceptional point, continuous gauges and a cone metric give
a representation with cofinal order and finite maximum/minimum formulas
\cite[Propositions~3.2--3.4, Theorem~3.5, and Corollary~3.6]{JS}.
For $k\ge2$, the local bases at different centers must also be compatible.
Our main results give an explicit metric realizing them simultaneously
and identify the compatibility condition governing intersections.

For continuous gauges $s_i:X\to[0,1]$ with $s_i^{-1}(0)=\{a_i\}$,
the exact separation condition is
\[
  \inf_{x\in X}\max\{s_i(x),s_j(x)\}>0\qquad(i\ne j).
\]
Under this condition, a normalized product map $U_s:X\to\Del$ sends the
exceptional points exactly to the vertices. We combine the pullback of the
$\frac12\ell^1$ metric with a bounded compatible metric on $X$, weighted
by distance from the vertex set. Lemma~\ref{lem:slack} supplies the
triangle-inequality estimate. Theorem~\ref{thm:metric} identifies the resulting
topology and proves completeness of this metric when the auxiliary metric
is complete; Theorem~\ref{thm:representation} represents every member of $\LF$.

Theorem~\ref{thm:meet} gives the corresponding intersection criterion.
Coordinatewise minima describe the intersection of two gauge topologies;
compatibility of that minimum is equivalent to Hausdorffness and guarantees
metrizability. This identifies exactly when the two coarsenings have a meet
or a common Hausdorff lower bound. Failure is witnessed by a sequence
converging to different exceptional points in the two topologies.
The order argument uses the general fact that two topologies have a common
Hausdorff lower bound exactly when their intersection is Hausdorff.

The existence of such failures has a direct precedent.
Kuba \cite[Section~10, Proposition~6 and the following remark,
pp.~111--112]{Kuba2016} constructs compact metrizable coarsenings
$\theta_a$ of the Euclidean topology $\eta$, agreeing with it off $a$,
by translating a topology homeomorphic to two circles meeting at one point.
For $a\ne b$, the distinct topologies $\theta_a,\theta_b$ belong to
$\mathcal L_{\{a,b\}}^m(\eta)$ and have no common Hausdorff lower topology:
a compact Hausdorff topology has no strictly coarser Hausdorff topology.
See also the expanded treatment \cite{Kuba2026}.

A consequence of the closed-discrete construction is the universal
obstruction in Theorem~\ref{thm:universal}; it also follows by applying
\cite[Theorem~1.1]{JS} at two centers with the same prescribed discrete set.
For every noncompact metrizable space and every finite $F$ with $|F|\ge2$,
an obstructed pair exists in $\LF$. Hence $\LF$ is downward directed,
or is a lattice, exactly when $(X,\tau)$ is compact
(Corollary~\ref{cor:dichotomy}). In that case $\LF=\{\tau\}$.

These localized coarsenings sit within the broader study of weaker
metrizable topologies \cite{GTW}, finite compactifications
\cite{Magill1965,Magill1968}, lattices and metric extensions
\cite{Belnov,HJW,Koushesh,MRW,PW}, and completion or other remainder
constructions \cite{Calmutchi,CV,FGO}.
If $Y=X\setminus F$ is dense in $(X,\sigma)$, the latter is a
finite-remainder extension of $Y$ constrained by $\sigma\subseteq\tau$.
An isolated exceptional point prevents this density.
The usual extension order allows continuous maps fixing $Y$; here lower
bounds are topologies on the fixed set $X$, ordered by inclusion.
The trace comparison is detailed in Remark~\ref{rem:traces}.

Sections~2--5 develop the compatible gauges, simplicial metric, and order
criterion. Section~6 proves the compactness--lattice dichotomy.
The strictness and localized compactness criteria, and finite-change
preservation, are supporting results extending
\cite[Theorem~4.3, Corollary~4.4, and Proposition~4.5]{JS};
the completeness argument in Section~7 is the compact-remainder argument of
\cite[Theorems~2.1--2.2]{HJW}. Section~8 records further questions.

\section{Finite localized coarsenings and compatible gauges}
Throughout, $(X,\tau)$ is a metrizable space and
\[
 F=\{a_1,\ldots,a_k\}\subseteq X
\]
is a finite set of distinct points.

\begin{definition}
A \emph{finite-point localized metrizable coarsening} of $\tau$ at $F$ is a
metrizable topology $\sigma$ on $X$ such that
\[
 \sigma\subseteq\tau,\qquad \sigma|_{X\setminus F}=\tau|_{X\setminus F}.
\]
The family of all such topologies, ordered by inclusion, is denoted by $\LF$.
\end{definition}

\begin{remark}[Relation with finite-remainder extensions]
Put $Y=X\setminus F$. If $Y$ is dense in $(X,\sigma)$, then $(X,\sigma)$ is a
metrizable extension of $Y$ with finite remainder $F$. The present family is
nevertheless more restrictive than the usual extension poset because the points
of $F$ are prescribed points of an ambient space $(X,\tau)$, one requires
$\sigma\subseteq\tau$, and the topology on $Y$ is fixed in advance. If some $a_i$
is isolated in $\sigma$, then $Y$ is not dense and $(X,\sigma)$ is not an
extension of $Y$ in the usual dense-subspace sense.
\end{remark}

Since $F$ is closed in every metrizable topology on $X$, the set $X\setminus F$
is open. Hence equality of the subspace topologies implies equality of the
ambient local topologies at every point of $X\setminus F$.

For the new phenomena we assume $k\ge2$. The one-point case $k=1$ is the
theory of \cite{JS}.

\begin{definition}
A \emph{compatible gauge system} at $F$ is a $k$-tuple
\[
 s=(s_1,\ldots,s_k)
\]
of $\tau$-continuous functions $s_i:X\to[0,1]$ such that
\[
 s_i^{-1}(0)=\{a_i\}\qquad(i=1,\ldots,k)
\]
and, for every $i\ne j$,
\begin{equation}\label{eq:compat}
 c_{ij}(s):=\inf_{x\in X}\max\{s_i(x),s_j(x)\}>0.
\end{equation}
The class of compatible gauge systems is denoted by $\GF$.
\end{definition}

The compatibility condition has a direct topological meaning.
\begin{lemma}\label{lem:compat}
For functions $s_i,s_j:X\to[0,1]$ with distinct singleton zero sets, the
following are equivalent:
\begin{enumerate}
\item $\inf_X\max\{s_i,s_j\}>0$;
\item there exist $\varepsilon_i,\varepsilon_j>0$ such that
      $\{s_i<\varepsilon_i\}\cap\{s_j<\varepsilon_j\}=\varnothing$;
\item there exists $\varepsilon>0$ such that
      $\{s_i<\varepsilon\}\cap\{s_j<\varepsilon\}=\varnothing$.
\end{enumerate}
\end{lemma}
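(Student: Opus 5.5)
The plan is to prove the cycle (i)$\Rightarrow$(iii)$\Rightarrow$(ii)$\Rightarrow$(i). Each implication is a direct unwinding of the definitions. Continuity and the zero-set hypothesis play no role; only the values in $[0,1]$ are used.

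For (i)$\Rightarrow$(iii), I set $c:=\inf_X\max\{s_i,s_j\}>0$ and take $\varepsilon:=c$. If some $x$ satisfied $s_i(x)<c$ and $s_j(x)<c$, then $\max\{s_i(x),s_j(x)\}<c$, which contradicts the definition of $c$ as the infimum. Hence $\{s_i<\varepsilon\}\cap\{s_j<\varepsilon\}=\varnothing$.

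For (iii)$\Rightarrow$(ii), I simply take $\varepsilon_i=\varepsilon_j=\varepsilon$.

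For (ii)$\Rightarrow$(i), I put $\delta:=\min\{\varepsilon_i,\varepsilon_j\}>0$ and fix $x\in X$. By disjointness, $x$ fails to lie in at least one of the two sets, so either $s_i(x)\ge\varepsilon_i$ or $s_j(x)\ge\varepsilon_j$. In either case $\max\{s_i(x),s_j(x)\}\ge\delta$. Taking the infimum over $x$ gives $\inf_X\max\{s_i,s_j\}\ge\delta>0$.

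There is no real obstacle here. The only point needing care is (i)$\Rightarrow$(iii), where the strict inequalities defining the sublevel sets must be set against the infimum, so that $\varepsilon=c$ itself works and nothing smaller is needed. I would also remark that the hypothesis of distinct singleton zero sets is necessary for the conditions to be nontrivially satisfiable: a common zero would make all three conditions fail simultaneously, which is consistent with the equivalence.
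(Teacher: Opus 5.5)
Your proof is correct and follows the same cycle (i)$\Rightarrow$(iii)$\Rightarrow$(ii)$\Rightarrow$(i) as the paper, with the same minimum $\min\{\varepsilon_i,\varepsilon_j\}$ in the last step. The only difference is cosmetic: you take $\varepsilon=c$, which is valid because $\max\{s_i,s_j\}\ge c$ pointwise, whereas the paper takes any $0<\varepsilon<c$.
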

\begin{proof}
If the infimum in (i) is $c>0$, any $0<\varepsilon<c$ gives (iii).
Clearly (iii) implies (ii). Finally, if (ii) holds and
$\eta=\min\{\varepsilon_i,\varepsilon_j\}$, then at every $x$ at least one
of $s_i(x),s_j(x)$ is at least $\eta$, so the infimum in (i) is at least $\eta$.
\end{proof}

The same local prescription makes sense before compatibility is imposed.
Let $r=(r_1,\ldots,r_k)$ be any $k$-tuple of $\tau$-continuous functions
$r_i:X\to[0,1]$ with $r_i^{-1}(0)=\{a_i\}$. Define $\nu_r$ by declaring
$U\subseteq X$ to be open exactly when $U\in\tau$ and, for every $i$ with
$a_i\in U$, there is an $\varepsilon_i>0$ such that
\begin{equation}\label{eq:raw}
 \{r_i<\varepsilon_i\}\subseteq U.
\end{equation}
It is immediate from the defining condition that $\nu_r$ is a topology,
that $\nu_r\subseteq\tau$, and that the topology on $X\setminus F$ is
unchanged. The sublevel sets of $r_i$ form a neighborhood base at $a_i$.
More precisely, because $F$ is finite and $r_i(a_j)>0$ for $j\ne i$,
every sufficiently small sublevel set $\{r_i<\varepsilon\}$ avoids
$F\setminus\{a_i\}$ and is $\nu_r$-open; every sublevel set contains one
of these smaller open sublevel sets and hence is a $\nu_r$-neighborhood of $a_i$.

\begin{proposition}[Compatibility is the exact Hausdorff condition]\label{prop:hausdorff}
For a tuple $r$ as above, the topology $\nu_r$ is Hausdorff if and only if
\[
 \inf_{x\in X}\max\{r_i(x),r_j(x)\}>0\qquad(i\ne j).
\]
Thus, for these prescribed gauge topologies, pairwise compatibility is
necessary and sufficient for Hausdorff separation.
\end{proposition}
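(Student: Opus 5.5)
We prove the two directions separately, using Lemma~\ref{lem:compat} to pass between the infimum condition and disjointness of sublevel sets. Throughout we use the description of $\nu_r$ given just before the statement. Sufficiently small sublevel sets $\{r_i<\varepsilon\}$ are $\nu_r$-open and avoid $F\setminus\{a_i\}$, and they form a neighborhood base at $a_i$. At points of $X\setminus F$ the $\nu_r$-neighborhoods are exactly the $\tau$-neighborhoods, because $X\setminus F$ is $\nu_r$-open: for each $a_i\in X\setminus F$ vacuously no condition is needed, and $X\setminus F\in\tau$.

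\emph{Sufficiency.} Assume every $c_{ij}>0$, and let $x\ne y$. We distinguish three cases.
\begin{enumerate}
\item If $x,y\notin F$, choose disjoint $\tau$-open sets $V\ni x$, $W\ni y$ contained in the $\tau$-open set $X\setminus F$. These sets contain no $a_i$, so they belong to $\nu_r$.
\item If $x=a_i$ and $y\notin F$, continuity gives $r_i(y)=\delta>0$. Take $\tau$-open sets $W\ni y$ inside $\{r_i>\delta/2\}\setminus F$, and let $V=\{r_i<\min(\delta/2,\varepsilon_0)\}$ with $\varepsilon_0$ so small that $V$ is $\nu_r$-open. Then $V\cap W=\varnothing$, and $W\in\nu_r$ because it misses $F$.
\item If $x=a_i$ and $y=a_j$ with $i\ne j$, Lemma~\ref{lem:compat}(iii) yields $\varepsilon$ with $\{r_i<\varepsilon\}\cap\{r_j<\varepsilon\}=\varnothing$. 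Shrinking $\varepsilon$ makes both sets $\nu_r$-open, and they separate $a_i$ from $a_j$.
\end{enumerate}

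\emph{Necessity.} Suppose $\inf_X\max\{r_i,r_j\}=0$ for some $i\ne j$. If $\nu_r$ were Hausdorff, there would be disjoint $\nu_r$-open sets $U\ni a_i$ and $V\ni a_j$. By the definition of $\nu_r$ there are $\varepsilon_i,\varepsilon_j>0$ with $\{r_i<\varepsilon_i\}\subseteq U$ and $\{r_j<\varepsilon_j\}\subseteq V$. These two sublevel sets are then disjoint, so Lemma~\ref{lem:compat}(ii) gives $\inf\max\{r_i,r_j\}>0$, which is a contradiction.

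The only point requiring care is case (ii) of sufficiency, where we must check that the chosen sets are really $\nu_r$-open. This is why $W$ is taken inside $X\setminus F$ and $V$ is a small sublevel set. Otherwise the argument is a direct unwinding of the definitions.
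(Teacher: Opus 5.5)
Your proof is correct and follows the paper's argument essentially step for step. For necessity you take sublevel sets inside disjoint $\nu_r$-open neighborhoods and apply Lemma~\ref{lem:compat}, and for sufficiency you use the same three cases: two exceptional points, an exceptional point against a point of $X\setminus F$ (a small $\nu_r$-open sublevel set against a $\tau$-open set inside $X\setminus F$), and two points of the open subspace $X\setminus F$. One small wording fix in case (ii): $r_i(y)>0$ comes from the zero-set condition $r_i^{-1}(0)=\{a_i\}$, while continuity is what makes $\{r_i>\delta/2\}$ $\tau$-open.
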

\begin{proof}
Suppose first that $\nu_r$ is Hausdorff. For $i\ne j$, choose disjoint
$\nu_r$-open neighborhoods $U_i$ of $a_i$ and $U_j$ of $a_j$.
By the definition of the local bases, there are $\varepsilon_i,\varepsilon_j>0$
such that
\[
 \{r_i<\varepsilon_i\}\subseteq U_i,\qquad
 \{r_j<\varepsilon_j\}\subseteq U_j.
\]
The two sublevel sets are disjoint, so Lemma~\ref{lem:compat} gives the
required positive lower bound.

Conversely, assume the compatibility inequalities. For distinct exceptional
points $a_i,a_j$, Lemma~\ref{lem:compat} gives disjoint sublevel neighborhoods;
shrinking their levels if necessary, we may also ensure that each avoids $F$
except for its own center, and hence both are $\nu_r$-open.
If $a_i\in F$ and $x\notin F$, choose
\[
 0<\varepsilon<
 \min\left\{\frac{r_i(x)}2,\frac12\min_{\ell\ne i}r_i(a_\ell)\right\}.
\]
Then $\{r_i<\varepsilon\}$ is a $\nu_r$-open neighborhood of $a_i$.
By continuity of $r_i$ and openness of $X\setminus F$, there is a
$\tau$-open neighborhood $W$ of $x$, contained in $X\setminus F$, on
which $r_i>\varepsilon$. Thus $W$ is $\nu_r$-open and disjoint from
$\{r_i<\varepsilon\}$. Finally, two distinct points of $X\setminus F$
can be separated inside the open Hausdorff subspace $X\setminus F$,
whose topology is unchanged. Hence $\nu_r$ is Hausdorff.
\end{proof}

For $s\in\GF$ we write $\tau_s=\nu_s$. It is immediate that
$\tau_s\subseteq\tau$ and that the two topologies agree on $X\setminus F$.
The next section shows that the exact Hausdorff condition above in fact
already guarantees metrizability, by an explicit metric.

\section{A simplicial metric}
Let
\[
 \Del=\left\{r=(r_1,\ldots,r_k)\in[0,1]^k:
                    \sum_{i=1}^k r_i=1\right\}
\]
be the standard simplex, with vertices $e_1,\ldots,e_k$. Put
\begin{equation}\label{eq:kappa}
 \kappa(r,t)=\frac12\sum_{i=1}^k|r_i-t_i|,
 \qquad H(r)=1-\max_{1\le i\le k}r_i.
\end{equation}
Thus
\[
 H(r)=\min_i\kappa(r,e_i),
\]
so $H$ is the $\kappa$-distance from the vertex set. In particular, $H$ is
$1$-Lipschitz and $H(r)=0$ exactly at the vertices.

The following elementary inequality is the metric core of the construction.
\begin{lemma}[Simplex slack inequality]\label{lem:slack}
Let $r,t,q\in\Del$ and put $m=\min\{H(r),H(q)\}$.
If $H(t)<m$, then
\begin{equation}\label{eq:slack}
 \kappa(r,t)+\kappa(t,q)-\kappa(r,q)\ge m-H(t).
\end{equation}
\end{lemma}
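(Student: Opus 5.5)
The plan is to write the slack coordinatewise and extract the whole bound from a single coordinate, namely the one where $t$ is largest. By \eqref{eq:kappa},
\[
 \kappa(r,t)+\kappa(t,q)-\kappa(r,q)=\frac12\sum_{i=1}^k\bigl(|r_i-t_i|+|t_i-q_i|-|r_i-q_i|\bigr).
\]
By the triangle inequality in $\R$, every summand is nonnegative. It therefore suffices to find one index $j$ whose summand is at least $2(m-H(t))$ and to discard all the others.

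First choose $j$ with $t_j=\max_i t_i$, so that $t_j=1-H(t)$. Next use the hypothesis $H(t)<m$ to bound the $j$-th coordinates of $r$ and $q$. Since $H(r)\ge m$, we get $r_j\le\max_i r_i=1-H(r)\le 1-m$, and similarly $q_j\le 1-m$. Both quantities are strictly below $1-H(t)=t_j$, so the point $t_j$ lies strictly above both $r_j$ and $q_j$.

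Then apply the elementary one-dimensional identity: if $x>\max\{a,b\}$, then
\[
 |a-x|+|x-b|-|a-b|=2\bigl(x-\max\{a,b\}\bigr).
\]
This holds because $|a-x|+|x-b|=2x-a-b$ and $|a-b|=\max\{a,b\}-\min\{a,b\}$. Applied with $a=r_j$, $x=t_j$ and $b=q_j$, half of the $j$-th summand equals $t_j-\max\{r_j,q_j\}\ge(1-H(t))-(1-m)=m-H(t)$. Adding the nonnegative remaining summands gives \eqref{eq:slack}.

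I do not expect a real obstacle here. The only point requiring care is the choice of the coordinate $j$, which must be the maximizing coordinate of $t$ and not of $r$ or $q$; the strict inequality $H(t)<m$ is exactly what places $t_j$ above both $r_j$ and $q_j$.
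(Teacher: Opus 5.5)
Your proposal is correct and is essentially the paper's proof: you choose $j$ maximizing $t$, use $r_j,q_j\le 1-m<t_j$ to get that the $j$th contribution equals $t_j-\max\{r_j,q_j\}\ge m-H(t)$, and note that all other coordinate contributions are nonnegative. The only addition is that you spell out the one-dimensional identity $|a-x|+|x-b|-|a-b|=2(x-\max\{a,b\})$ for $x>\max\{a,b\}$, which the paper uses without comment.
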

\begin{proof}
Choose $j$ such that $t_j=\max_i t_i=1-H(t)$.
Since $H(r)\ge m>H(t)$, every coordinate of $r$ is at most
$1-m<t_j$; in particular $r_j\le1-m$. The same holds for $q_j$.
The contribution of the $j$th coordinate to
$\kappa(r,t)+\kappa(t,q)-\kappa(r,q)$ is
\[
 \frac12\bigl(|r_j-t_j|+|t_j-q_j|-|r_j-q_j|\bigr)
 =t_j-\max\{r_j,q_j\}.
\]
Hence this contribution alone is at least
\[
 (1-H(t))-(1-m)=m-H(t).
\]
All remaining coordinate contributions are nonnegative by the triangle
inequality on $\R$.
\end{proof}

We now isolate a metric lemma that will also be useful independently of the
gauge construction.
\begin{lemma}[Simplicial warped metric]\label{lem:warped}
Let $(X,d)$ be a metric space with $d\le1$, and let
$U:X\to\Del$ be any map such that
\begin{equation}\label{eq:fibers}
 U^{-1}(e_i)\text{ contains at most one point for each }i.
\end{equation}
Define
\begin{equation}\label{eq:warped}
 \rho_U(x,y)=\kappa(U(x),U(y))
       +\min\{H(U(x)),H(U(y))\}\,d(x,y).
\end{equation}
Then $\rho_U$ is a metric on $X$.
\end{lemma}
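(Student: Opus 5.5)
We check symmetry, nonnegativity, definiteness and the triangle inequality in turn. Symmetry and nonnegativity are immediate from the symmetry of $\kappa$, $d$ and the minimum. For definiteness, suppose $\rho_U(x,y)=0$. Then $\kappa(U(x),U(y))=0$, so $U(x)=U(y)=:r$, and $H(r)\,d(x,y)=0$. If $H(r)>0$, then $d(x,y)=0$ and $x=y$. If $H(r)=0$, then $r$ is a vertex $e_i$, and the fiber condition \eqref{eq:fibers} forces $x=y$.

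For the triangle inequality, fix $x,y,z$ and write $r=U(x)$, $t=U(y)$, $q=U(z)$. We must show
\[
\kappa(r,t)+\kappa(t,q)-\kappa(r,q)+\min\{H(r),H(t)\}d(x,y)+\min\{H(t),H(q)\}d(y,z)\ge \min\{H(r),H(q)\}d(x,z).
\]
Put $m=\min\{H(r),H(q)\}$ and split according to whether $H(t)\ge m$ or $H(t)<m$.

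\emph{Case $H(t)\ge m$.} Both minima on the left are at least $m$. Since the $\kappa$-defect is nonnegative, the left side is at least $m\bigl(d(x,y)+d(y,z)\bigr)\ge m\,d(x,z)$.

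\emph{Case $H(t)<m$.} Lemma~\ref{lem:slack} bounds the $\kappa$-defect below by $m-H(t)$. Both minima on the left equal $H(t)$, because $H(r),H(q)\ge m>H(t)$. So the left side is at least
\[
(m-H(t))+H(t)\bigl(d(x,y)+d(y,z)\bigr)\ge (m-H(t))\,d(x,z)+H(t)\,d(x,z)=m\,d(x,z).
\]
The first inequality here uses $d\le1$, so that $m-H(t)\ge (m-H(t))\,d(x,z)$, together with the triangle inequality for $d$.

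The only nontrivial step is this second case, where the middle point lies closer to the vertex set than both endpoints. There the warping weight at $y$ collapses, and the slack supplied by Lemma~\ref{lem:slack} must exactly compensate for the lost factor $m-H(t)$. The boundedness hypothesis $d\le1$ is used precisely at this point.
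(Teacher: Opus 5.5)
Your proposal is correct and follows the paper's proof essentially line by line. It uses the same definiteness argument via the fiber condition, and the same case split on $H(t)\ge m$ versus $H(t)<m$, where Lemma~\ref{lem:slack} together with $d\le1$ compensates for the lost weight $m-H(t)$ in the second case.
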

\begin{proof}
Symmetry and nonnegativity are clear. If $\rho_U(x,y)=0$, then
$U(x)=U(y)$. If this common value is not a vertex, its $H$-value is positive,
so the second term in \eqref{eq:warped} gives $d(x,y)=0$.
If it is a vertex, \eqref{eq:fibers} again gives $x=y$.

It remains to prove the triangle inequality. Fix $x,y,z\in X$ and write
\[
 r=U(x),\qquad t=U(y),\qquad q=U(z),
\]
\[
 A=H(r),\qquad B=H(t),\qquad C=H(q),\qquad m=\min\{A,C\}.
\]
If $B\ge m$, then $\min\{A,B\}\ge m$ and $\min\{B,C\}\ge m$.
Using the triangle inequalities for $\kappa$ and $d$ gives
\[
 \rho_U(x,y)+\rho_U(y,z)\ge\kappa(r,q)+m\,d(x,z)=\rho_U(x,z).
\]
Suppose now that $B<m$. Then $\min\{A,B\}=\min\{B,C\}=B$.
By Lemma~\ref{lem:slack},
\[
 \kappa(r,t)+\kappa(t,q)\ge\kappa(r,q)+(m-B).
\]
Moreover, since $d\le1$,
\[
 \begin{split}
 m\,d(x,z)&=B\,d(x,z)+(m-B)d(x,z)\\
           &\le B\bigl(d(x,y)+d(y,z)\bigr)+(m-B).
 \end{split}
\]
Combining the last two inequalities yields
$\rho_U(x,z)\le\rho_U(x,y)+\rho_U(y,z)$.
\end{proof}

We now associate a simplex-valued map to a compatible gauge system.
For $s=(s_1,\ldots,s_k)\in\GF$, set
\begin{equation}\label{eq:products}
 \begin{gathered}
 P_i(x)=\prod_{j\ne i}s_j(x),\qquad
 u_i(x)=\frac{P_i(x)}{\sum_{\ell=1}^k P_\ell(x)},\\
 U_s(x)=(u_1(x),\ldots,u_k(x)).
 \end{gathered}
\end{equation}
The denominator is always positive: outside $F$ all $s_i$ are positive,
while at $a_i$ the single term $P_i(a_i)$ is positive and all the others
vanish. Hence $U_s$ is continuous and
\begin{equation}\label{eq:vertices}
 U_s^{-1}(e_i)=\{a_i\}.
\end{equation}
The key point is that the distance to $e_i$ recovers the local scale of
$s_i$ up to cofinal equivalence.

\begin{lemma}\label{lem:cofinal}
Fix $i$. The two families
\[
 \bigl\{\{s_i<\varepsilon\}:\varepsilon>0\bigr\}
 \quad\text{and}\quad
 \bigl\{\{1-u_i<\varepsilon\}:\varepsilon>0\bigr\}
\]
are mutually cofinal.
\end{lemma}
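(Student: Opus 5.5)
We prove cofinality in both directions with explicit quantitative comparisons, using the compatibility constants $c_{ij}=c_{ij}(s)>0$ and the bound $s_j\le1$. First observe that
\[
1-u_i(x)=\frac{\sum_{\ell\ne i}P_\ell(x)}{\sum_{\ell}P_\ell(x)},
\]
and that for $\ell\ne i$ we have $P_\ell/P_i=s_i/s_\ell$ wherever $s_\ell>0$. The plan is to relate $1-u_i$ to $s_i$ on a small sublevel set of $s_i$, on which all the other $s_\ell$ are bounded below.

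\textbf{Step 1: $\{s_i<\varepsilon\}$ lies inside a sublevel set of $1-u_i$.} Put $c=\min_{j\ne i}c_{ij}>0$. If $s_i(x)<c$, then $s_\ell(x)\ge c$ for every $\ell\ne i$, because $\max\{s_i,s_\ell\}\ge c_{i\ell}\ge c$. In particular $P_i(x)\ge c^{k-1}>0$. Each $P_\ell$ with $\ell\ne i$ contains the factor $s_i$ and its other factors are at most $1$, so $P_\ell(x)\le s_i(x)$. Hence
\[
1-u_i(x)\le\frac{(k-1)s_i(x)}{P_i(x)}\le\frac{(k-1)s_i(x)}{c^{k-1}}.
\]
Given $\delta>0$, choose $\varepsilon<\min\{c,\ \delta c^{k-1}/(k-1)\}$. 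Then $\{s_i<\varepsilon\}\subseteq\{1-u_i<\delta\}$.

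\textbf{Step 2: a sublevel set of $1-u_i$ lies inside $\{s_i<\varepsilon\}$.} This is the main obstacle, because a small value of $1-u_i$ must be shown to force $s_i$ to be small globally, not just near $a_i$. We use compatibility to bound the $\ell\ne i$ terms from below. Suppose $1-u_i(x)<\delta$ with $\delta<1/k$. Then $u_i(x)>1-1/k$, so $u_i(x)>u_\ell(x)$ and hence $P_i(x)>P_\ell(x)$ for every $\ell\ne i$.

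We first claim $s_\ell(x)\ge c$ for all $\ell\ne i$. If instead $s_\ell(x)<c$ for some $\ell\ne i$, then compatibility gives $s_i(x)\ge c_{i\ell}\ge c$. Now $P_i$ and $P_\ell$ share the factors $s_j$ with $j\ne i,\ell$, so $P_i>P_\ell$ means $s_\ell(x)>s_i(x)\ge c$, a contradiction. (When $s_\ell(x)=0$, i.e. $x=a_\ell$, we have $u_\ell=1$, which is excluded directly.)

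With $s_\ell(x)\ge c$ for all $\ell\ne i$, take any fixed $\ell\ne i$. Then $1-u_i\ge u_\ell$ and $\sum_m P_m\le k P_i$, so
\[
1-u_i(x)\ge\frac{P_\ell(x)}{kP_i(x)}=\frac{s_i(x)}{k\,s_\ell(x)}\ge\frac{s_i(x)}{k}.
\]
Therefore $1-u_i(x)<\delta$ implies $s_i(x)<k\delta$. Given $\varepsilon>0$, choose $\delta<\min\{1/k,\ \varepsilon/k\}$; then $\{1-u_i<\delta\}\subseteq\{s_i<\varepsilon\}$.

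Steps 1 and 2 together show that the two families are mutually cofinal. As a side check, the same estimates are consistent with \eqref{eq:vertices}: the point $a_i$ is the only point where either quantity vanishes.
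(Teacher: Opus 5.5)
Your proof is correct and takes essentially the paper's route: compatibility bounds the other gauges from below on a small $s_i$-sublevel set to give the first inclusion, and the reverse inclusion rests only on $s_\ell\le 1$ (working with the products $P_\ell$ instead of the ratio formula just spares you a separate check at the points of $F$). One small point: the claim $s_\ell(x)\ge c$ in Step~2 is not needed, since $s_\ell(x)>0$ already follows from $P_i(x)>P_\ell(x)\ge 0$; the paper gets this direction directly from $\sum_{j\ne i}s_i/s_j\ge (k-1)s_i$, so it is the easy direction rather than the main obstacle.
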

\begin{proof}
Let $c_i=\min_{j\ne i}c_{ij}(s)>0$.
If $x\notin F$, then from \eqref{eq:products}
\begin{equation}\label{eq:ratios}
 u_i(x)=\frac{1}{1+\displaystyle\sum_{j\ne i}\frac{s_i(x)}{s_j(x)}}.
\end{equation}
For $x\notin F$, if $s_i(x)<c_i$, compatibility gives $s_j(x)\ge c_i$
for every $j\ne i$. Therefore
\[
 1-u_i(x)\le\sum_{j\ne i}\frac{s_i(x)}{s_j(x)}
            \le\frac{k-1}{c_i}s_i(x).
\]
At $a_i$ the same implication is automatic, while at $a_j$ with $j\ne i$
the inequality $s_i(a_j)<c_i$ cannot occur, since
$s_i(a_j)\ge c_{ij}(s)\ge c_i$. Thus sufficiently small $s_i$ forces
arbitrarily small $1-u_i$ on all of $X$.

Conversely, since every $s_j\le1$, for $x\notin F$ the sum in
\eqref{eq:ratios} satisfies
\[
 R_i(x):=\sum_{j\ne i}\frac{s_i(x)}{s_j(x)}\ge(k-1)s_i(x).
\]
Also $1-u_i(x)=R_i(x)/(1+R_i(x))$. Hence, for $0<\eta<1$,
\[
 1-u_i(x)<\eta\quad\Longrightarrow\quad
 s_i(x)<\frac{\eta}{(1-\eta)(k-1)}.
\]
The exceptional points cause no difficulty: $1-u_i(a_i)=0$, whereas
$1-u_i(a_j)=1$ for $j\ne i$. The asserted cofinality follows.
\end{proof}

\begin{theorem}[Explicit simplicial metrization]\label{thm:metric}
Let $s\in\GF$ and let $d\le1$ be any compatible metric for $\tau$. Define
\begin{equation}\label{eq:metric}
 \rho_s(x,y)=\kappa(U_s(x),U_s(y))
       +\min\{H(U_s(x)),H(U_s(y))\}\,d(x,y).
\end{equation}
Then $\rho_s$ is a metric and $\tau_{\rho_s}=\tau_s\in\LF$.
More precisely,
\begin{equation}\label{eq:center-distance}
 \rho_s(a_i,x)=1-u_i(x),
\end{equation}
so the $\rho_s$-balls at $a_i$ are cofinal with the sublevel sets of $s_i$.
The topology is independent of the chosen compatible metric $d\le1$.
If $d$ is complete, then $\rho_s$ is complete.
\end{theorem}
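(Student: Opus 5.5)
The plan is to obtain the metric property from Lemma~\ref{lem:warped}, applied to $U=U_s$. Hypothesis \eqref{eq:fibers} holds because \eqref{eq:vertices} gives $U_s^{-1}(e_i)=\{a_i\}$. So $\rho_s$ is a metric.

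Next I compute \eqref{eq:center-distance}. Since $U_s(a_i)=e_i$, we have $H(U_s(a_i))=0$, so the warped term vanishes. Using $\sum_\ell u_\ell=1$, $\kappa(e_i,U_s(x))=\tfrac12\bigl((1-u_i)+\sum_{j\ne i}u_j\bigr)=1-u_i(x)$. The ball $B_{\rho_s}(a_i,\varepsilon)$ is therefore exactly $\{1-u_i<\varepsilon\}$. By Lemma~\ref{lem:cofinal} these balls are cofinal with the sublevel sets $\{s_i<\varepsilon\}$, which form a $\tau_s$-neighbourhood base at $a_i$ (as noted before Proposition~\ref{prop:hausdorff}). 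Hence $\tau_{\rho_s}$ and $\tau_s$ have the same neighbourhood filters at each $a_i$.

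At a point $x\notin F$, put $h=H(U_s(x))>0$. I will show that $\rho_s$ and $d$ induce the same neighbourhoods of $x$. First, $U_s$ and $H$ are $\tau$-continuous, so $\rho_s(x,\cdot)$ is $\tau$-continuous. Hence every $\rho_s$-ball about $x$ is a $\tau$-neighbourhood, i.e.\ contains a $d$-ball. Conversely, if $\rho_s(x,y)<\delta$ with $\delta<h/2$, then $\kappa(U_s(x),U_s(y))<\delta$. Since $H$ is $1$-Lipschitz for $\kappa$, this gives $H(U_s(y))>h-\delta>h/2$. Then $\min\{H(U_s(x)),H(U_s(y))\}\ge h/2$, so $d(x,y)<2\delta/h$. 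So small $\rho_s$-balls about $x$ lie inside prescribed $d$-balls.

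Together these show $\tau_{\rho_s}$ and $\tau_s$ have the same neighbourhood filters at every point: at points of $X\setminus F$ both agree with $\tau$, and at the $a_i$ by the previous paragraph. Hence $\tau_{\rho_s}=\tau_s$. This topology is metrizable, coarser than $\tau$, and equal to $\tau$ off $F$, so $\tau_s\in\LF$. The description of $\tau_s$ never mentions $d$, so the topology is independent of the choice of $d$.

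Completeness is the step needing the most care. Let $(x_n)$ be $\rho_s$-Cauchy. Then $U_s(x_n)$ is $\kappa$-Cauchy in the compact simplex, so it converges to some $r\in\Del$. I split into two cases.

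If $r=e_i$, then $\rho_s(a_i,x_n)=1-u_i(x_n)=\kappa(e_i,U_s(x_n))\to0$, so $x_n\to a_i$ in $\rho_s$.

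If $r$ is not a vertex, then $H(r)>0$. By the Lipschitz property, $H(U_s(x_n))\ge H(r)/2$ for large $n$. The Cauchy estimate on the warped term then makes $(x_n)$ $d$-Cauchy, so it $d$-converges to some $x$ because $d$ is complete. Convergence in $\tau$ gives $U_s(x_n)\to U_s(x)$, so $U_s(x)=r$ is not a vertex, and therefore $x\notin F$. Finally, $\rho_s(x_n,x)\le\kappa(U_s(x_n),U_s(x))+d(x_n,x)\to0$.

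This case split, using compactness of $\Del$ and the lower bound on $H$ away from the vertices, is exactly what makes completeness transfer from $d$ to $\rho_s$.
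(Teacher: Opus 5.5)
Your proposal is correct and follows the paper's proof. The steps match: the metric property comes from Lemma~\ref{lem:warped} with \eqref{eq:vertices}; the centers are handled by $\kappa(e_i,r)=1-r_i$ and Lemma~\ref{lem:cofinal}; away from $F$ you use $\tau$-continuity of $U_s$ and the $1$-Lipschitz bound $H(U_s(y))>h/2$; and completeness uses the same vertex/non-vertex case split. Your extra remarks, such as observing that the $d$-limit lies outside $F$, are harmless.
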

\begin{proof}
The metric assertion follows from Lemma~\ref{lem:warped} and
\eqref{eq:vertices}. Since $H(e_i)=0$ and $\kappa(e_i,r)=1-r_i$
for $r\in\Del$, equation \eqref{eq:center-distance} follows.
Lemma~\ref{lem:cofinal} therefore shows that the local topology of $\rho_s$
at each $a_i$ is exactly the one prescribed by $s_i$.

It remains to compare the topologies away from $F$. The map $U_s$ is
$\tau$-continuous, so \eqref{eq:metric} shows that the identity
$(X,\tau)\to(X,\tau_{\rho_s})$ is continuous.
Now fix $x\notin F$ and put $h=H(U_s(x))>0$. If $\rho_s(x,y)<h/2$,
then $\kappa(U_s(x),U_s(y))<h/2$.
Since $H$ is $1$-Lipschitz, $H(U_s(y))>h/2$. Hence
\[
 \rho_s(x,y)\ge\frac h2 d(x,y),
\]
which proves continuity of the inverse identity at $x$. Thus the topologies
agree on $X\setminus F$, and the local description at the points of $F$
gives $\tau_{\rho_s}=\tau_s$. Independence of $d$ is immediate from this
description.

Assume finally that $d$ is complete and let $(x_n)$ be $\rho_s$-Cauchy.
Then $(U_s(x_n))$ is $\kappa$-Cauchy and hence converges to some
$r\in\Del$. If $H(r)=0$, then $r=e_i$ for some $i$, and
\eqref{eq:center-distance} gives
\[
 \rho_s(a_i,x_n)=1-u_i(x_n)\longrightarrow0.
\]
If $H(r)>0$, then eventually $H(U_s(x_n))\ge H(r)/2$, so
\[
 d(x_n,x_m)\le\frac{2}{H(r)}\rho_s(x_n,x_m)
\]
for all large $m,n$. Thus $(x_n)$ is $d$-Cauchy and converges to some
$x\in X$. Continuity of $U_s$ and the defining formula then give
$\rho_s(x_n,x)\to0$.
\end{proof}

\begin{corollary}[Hausdorffness and metrizability coincide for finite gauge topologies]
\label{cor:hausdorff-metric}
Let $r=(r_1,\ldots,r_k)$ be any finite gauge tuple as in
Proposition~\ref{prop:hausdorff}, and let $\nu_r$ be its gauge topology.
Then the following are equivalent:
\begin{enumerate}
\item $r$ is compatible;
\item $\nu_r$ is Hausdorff;
\item $\nu_r$ is metrizable.
\end{enumerate}
When these conditions hold, the simplicial formula \eqref{eq:metric},
with $s=r$, gives an explicit compatible metric.
\end{corollary}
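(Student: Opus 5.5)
The plan is to close the cycle (i)$\Rightarrow$(iii)$\Rightarrow$(ii)$\Rightarrow$(i), using only results already established. The two implications that carry content are both available, so the work is mainly checking that the hypotheses of the earlier statements match the present ones.

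For (i)$\Rightarrow$(iii), suppose $r$ is compatible. Then $r\in\GF$, because each $r_i$ is $\tau$-continuous, takes values in $[0,1]$, has $r_i^{-1}(0)=\{a_i\}$, and satisfies \eqref{eq:compat}. By definition $\nu_r=\tau_r$. Choose any compatible metric $d\le1$ for $\tau$; one exists, for instance $\min\{d_0,1\}$ for any metric $d_0$ inducing $\tau$. Theorem~\ref{thm:metric} then says that $\rho_r$ is a metric with $\tau_{\rho_r}=\tau_r=\nu_r$. So $\nu_r$ is metrizable, and \eqref{eq:metric} with $s=r$ is an explicit compatible metric. This also proves the closing sentence of the corollary.

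For (iii)$\Rightarrow$(ii), a metrizable topology is Hausdorff.

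For (ii)$\Rightarrow$(i), apply the forward direction of Proposition~\ref{prop:hausdorff}: if $\nu_r$ is Hausdorff, then $\inf_X\max\{r_i,r_j\}>0$ for every $i\ne j$. That is exactly the compatibility of $r$.

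No step is a real obstacle. The one point to check is that the identification $\nu_r=\tau_r$ used in (i)$\Rightarrow$(iii) is definitional, so that Theorem~\ref{thm:metric}, stated for $s\in\GF$, applies verbatim to the tuple $r$.
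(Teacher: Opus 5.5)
Your proposal is correct and matches the paper's proof: Proposition~\ref{prop:hausdorff} supplies (ii)$\Rightarrow$(i), Theorem~\ref{thm:metric} applied to $r\in\GF$ (with $\nu_r=\tau_r$ by definition) supplies (i)$\Rightarrow$(iii) together with the explicit metric, and (iii)$\Rightarrow$(ii) is automatic. You arrange these as a single cycle, whereas the paper cites the proposition for the full equivalence (i)$\Leftrightarrow$(ii); the content is the same.
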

\begin{proof}
Proposition~\ref{prop:hausdorff} gives (i)$\Leftrightarrow$(ii),
Theorem~\ref{thm:metric} gives (i)$\Rightarrow$(iii), and
(iii)$\Rightarrow$(ii) is automatic.
\end{proof}

\begin{remark}[The two-point bridge formula]
Let $F=\{a,b\}$ and write the compatible system as $(s_a,s_b)$. Put
\[
 u(x)=\frac{s_a(x)}{s_a(x)+s_b(x)}.
\]
Then $u(a)=0$, $u(b)=1$, and the sublevel sets of $u$ at $0$ are
cofinal with those of $s_a$, while the sublevel sets of $1-u$ are
cofinal with those of $s_b$. Identifying $\Delta^1$ with $[0,1]$,
formula \eqref{eq:metric} becomes
\[
 \begin{gathered}
 \rho_u(x,y)=|u(x)-u(y)|+\min\{h(u(x)),h(u(y))\}\,d(x,y),\\
 h(t)=\min\{t,1-t\}.
 \end{gathered}
\]
Thus the two-point theory may be encoded by a single bridge gauge.
\end{remark}

\section{Representation and order}
The preceding construction starts from a compatible system. We now show that
every member of $\LF$ arises in this way.

\begin{theorem}[Finite gauge representation]\label{thm:representation}
Assume $k\ge2$. Let $\sigma\in\LF$ and let $q$ be any compatible metric
for $\sigma$. Put
\begin{equation}\label{eq:normalized-distances}
 s_i(x)=\frac{q(x,a_i)}{1+q(x,a_i)}\qquad(i=1,\ldots,k).
\end{equation}
Then $s=(s_1,\ldots,s_k)$ belongs to $\GF$ and $\tau_s=\sigma$.
Consequently every finite-point localized metrizable coarsening admits an
explicit metric of the form \eqref{eq:metric}.
\end{theorem}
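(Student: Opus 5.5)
We check three things in order: that each $s_i$ is an admissible gauge, that the system is compatible, and that the prescribed topology $\tau_s$ coincides with $\sigma$. The final sentence then follows from Theorem~\ref{thm:metric}.

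\emph{Gauges.} Since $\sigma\subseteq\tau$, the function $x\mapsto q(x,a_i)$ is $\sigma$-continuous and hence $\tau$-continuous. Composing with the homeomorphism $t\mapsto t/(1+t)$ of $[0,\infty)$ onto $[0,1)$, each $s_i$ is a $\tau$-continuous map into $[0,1]$. Its zero set is $\{a_i\}$ because $q$ is a metric.

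\emph{Compatibility.} Fix $i\ne j$ and put $\delta=q(a_i,a_j)>0$. By the triangle inequality, $\max\{q(x,a_i),q(x,a_j)\}\ge\delta/2$ for every $x$. Since $t\mapsto t/(1+t)$ is increasing, this gives
\[
\inf_X\max\{s_i,s_j\}\ge \frac{\delta/2}{1+\delta/2}>0 .
\]
Thus $s\in\GF$. Equivalently, in the language of Lemma~\ref{lem:compat}, the $q$-balls of radius $\delta/2$ about $a_i$ and $a_j$ are disjoint.

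\emph{The topologies agree.} The sublevel set $\{s_i<\varepsilon\}$ is exactly the open $q$-ball $B_q(a_i,\varepsilon/(1-\varepsilon))$ for $0<\varepsilon<1$. So the sublevel sets of $s_i$ form precisely a neighborhood base of $a_i$ in $\sigma$.

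To show $\sigma\subseteq\tau_s$, take $U\in\sigma$. Then $U\in\tau$, and for each $a_i\in U$ some $q$-ball about $a_i$, hence some sublevel set $\{s_i<\varepsilon_i\}$, lies in $U$. This is exactly the defining condition \eqref{eq:raw}, so $U\in\tau_s$.

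For the converse, take $U\in\tau_s$; we show $U$ is a $\sigma$-neighborhood of each of its points. At a point $a_i\in U$, $U$ contains a sublevel set of $s_i$, which is a $q$-ball and so a $\sigma$-neighborhood. At a point $x\in U\setminus F$, the set $U\setminus F$ is $\tau$-open inside $X\setminus F$. Since $\sigma|_{X\setminus F}=\tau|_{X\setminus F}$, it is relatively $\sigma$-open in $X\setminus F$. Because $F$ is finite and $\sigma$ is Hausdorff, $X\setminus F$ is $\sigma$-open, so $U\setminus F\in\sigma$. Hence $U\in\sigma$.

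The only step that needs care is this last one, transferring openness at non-exceptional points. It rests entirely on $X\setminus F$ being $\sigma$-open, which is the remark made after the definition of $\LF$.

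Finally, since $s\in\GF$ and $\tau_s=\sigma$, Theorem~\ref{thm:metric} applied with any bounded compatible metric $d\le1$ for $\tau$ gives $\tau_{\rho_s}=\sigma$. This provides the explicit metric of the form \eqref{eq:metric}.
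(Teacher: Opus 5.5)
Your proof is correct and follows the paper's argument: the same triangle-inequality bound $\max\{s_i,s_j\}\ge\frac{\delta/2}{1+\delta/2}$ gives compatibility, and the same identification of $s_i$-sublevel sets with $q$-balls at $a_i$ gives the local bases. The only difference is that you write out the double inclusion $\sigma\subseteq\tau_s\subseteq\sigma$, using that $X\setminus F$ is $\sigma$-open, whereas the paper compresses this step into the remark that the two topologies agree off $F$ and share local bases at the points of $F$.
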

\begin{proof}
Each $s_i$ is $\sigma$-continuous and hence $\tau$-continuous because
$\sigma\subseteq\tau$. Its zero set is exactly $\{a_i\}$.
For $i\ne j$, set $c=q(a_i,a_j)>0$. For every $x\in X$, the triangle
inequality gives $q(x,a_i)+q(x,a_j)\ge c$, so at least one of these two
numbers is at least $c/2$. Since $t\mapsto t/(1+t)$ is increasing,
\[
 \max\{s_i(x),s_j(x)\}\ge\frac{c/2}{1+c/2}>0.
\]
Thus $s$ is compatible.

The sublevel sets of $s_i$ are precisely reparametrized $q$-balls at $a_i$,
so they form the $\sigma$-neighborhood base at $a_i$. The topologies
$\sigma$ and $\tau_s$ agree on $X\setminus F$ and have the same local
bases at every point of $F$. Hence they are equal. The particular
normalization $t\mapsto t/(1+t)$ is used only to place the gauges in
$[0,1]$; any increasing homeomorphism of $[0,\infty)$ onto $[0,1)$
would give an equivalent gauge system.
\end{proof}

\begin{remark}[Relation with extension traces]\label{rem:traces}
Suppose $Y=X\setminus F$ is dense in a localized coarsening $\sigma$.
A decreasing sequence of sufficiently small gauge sublevel sets at each
$a_i$, after deleting $F$, gives a local trace on $Y$.
The cofinal comparison below is the coordinatewise counterpart of
\cite[Theorems~3.5 and~3.9]{HJW}. The ambient coarsening constraint
in the one-point case is described in \cite[Proposition~4.1]{JS}.
Here the additional issue is simultaneous compatibility at the prescribed
centers, together with an explicit metric realizing all their bases.
\end{remark}

For $s,t\in\GF$ define
\begin{equation}\label{eq:preorder}
 s\preceq t
\end{equation}
if for every $i$ and every $\varepsilon>0$ there exists $\delta>0$ such that
\begin{equation}\label{eq:cofinal-order}
 \{t_i<\delta\}\subseteq\{s_i<\varepsilon\}.
\end{equation}
Write $s\asymp t$ when both $s\preceq t$ and $t\preceq s$.

\begin{theorem}[Order representation]\label{thm:order}
For $s,t\in\GF$,
\[
 \tau_s\subseteq\tau_t\quad\Longleftrightarrow\quad s\preceq t.
\]
Consequently,
\[
 \GF/{\asymp}\longrightarrow\LF,\qquad[s]\longmapsto\tau_s,
\]
is an order isomorphism.
\end{theorem}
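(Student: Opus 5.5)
The argument works one exceptional point at a time. Since $\tau_s$ and $\tau_t$ agree with $\tau$ on the open set $X\setminus F$, inclusion of the two topologies is decided entirely by the local bases at the points $a_i$.

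\emph{Step 1: reduce inclusion to local bases.} I will show that $\tau_s\subseteq\tau_t$ holds if and only if, for each $i$, every $\tau_s$-neighborhood of $a_i$ is a $\tau_t$-neighborhood of $a_i$.

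For the forward implication, take a $\tau_s$-neighborhood $N$ of $a_i$. It contains a $\tau_s$-open set containing $a_i$, and that set is then $\tau_t$-open. Hence $N$ is a $\tau_t$-neighborhood of $a_i$.

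For the converse, let $V\in\tau_s$. Then $V\in\tau$. For each $a_i\in V$, the set $V$ is a $\tau_s$-neighborhood of $a_i$, so by hypothesis it is a $\tau_t$-neighborhood. By the local description after \eqref{eq:raw}, this means some sublevel set $\{t_i<\delta\}$ is contained in $V$. So $V$ satisfies the defining condition of $\nu_t$, and $V\in\tau_t$.

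\emph{Step 2: translate into sublevel sets.} The sublevel sets $\{s_i<\varepsilon\}$ form a neighborhood base for $\tau_s$ at $a_i$, as noted after \eqref{eq:raw}. The same holds for $t$ and $\tau_t$. So the condition in Step 1 is equivalent to the following: for every $\varepsilon>0$, the set $\{s_i<\varepsilon\}$ contains some $\{t_i<\delta\}$. This is exactly \eqref{eq:cofinal-order}, so $\tau_s\subseteq\tau_t$ if and only if $s\preceq t$.

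The one point needing care is that "$N$ is a $\tau_t$-neighborhood of $a_i$" really does mean "$N$ contains some $\{t_i<\delta\}$". This uses that small sublevel sets avoid the other points of $F$ and are therefore $\nu_t$-open, which the paper has already recorded.

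\emph{Step 3: the order isomorphism.} Applying the equivalence in both directions gives $\tau_s=\tau_t$ if and only if $s\asymp t$. Hence the map $[s]\mapsto\tau_s$ is well defined and injective on $\GF/{\asymp}$.

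It lands in $\LF$ by Theorem~\ref{thm:metric}, and it is surjective by Theorem~\ref{thm:representation}. The first part of the argument shows that it preserves and reflects the order, so it is an order isomorphism.

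No step is a serious obstacle. The only subtlety is the bookkeeping in Step 1, namely that the condition defining $\nu_t$-open sets is purely local at each $a_i$.
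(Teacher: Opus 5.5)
Your proof is correct and takes essentially the same route as the paper. Both directions reduce to comparing the sublevel-set neighborhood bases at each $a_i$, using that $\tau_s$ and $\tau_t$ agree with $\tau$ off $F$; your Step~3 just spells out the well-definedness, injectivity, order-reflection and surjectivity (via Theorems~\ref{thm:metric} and~\ref{thm:representation}) that the paper compresses into one sentence.
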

\begin{proof}
Assume first that $\tau_s\subseteq\tau_t$. For each $i$ and
$\varepsilon>0$, the set $\{s_i<\varepsilon\}$ is a
$\tau_t$-neighborhood of $a_i$, so it contains a sublevel set
$\{t_i<\delta\}$ for some $\delta>0$. Thus $s\preceq t$.

Conversely, suppose $s\preceq t$ and let $U\in\tau_s$.
Away from $F$ the two topologies coincide. If $a_i\in U$, choose
$\varepsilon_i>0$ with $\{s_i<\varepsilon_i\}\subseteq U$, and then
choose $\delta_i>0$ with
\[
 \{t_i<\delta_i\}\subseteq\{s_i<\varepsilon_i\}.
\]
Thus every exceptional point belonging to $U$ is a $\tau_t$-interior
point, and $U\in\tau_t$. The remaining assertions follow from
Theorem~\ref{thm:representation}.
\end{proof}

\section{Joins, meets, and the obstruction to lower bounds}
The one-point family studied in \cite[Corollary~3.6]{JS} is a distributive
lattice. For a finite exceptional set, joins still behave coordinatewise,
but meets are controlled by compatibility.

\begin{theorem}[Finite joins]\label{thm:joins}
Let $s,t\in\GF$ and define $(s\vee t)_i=\max\{s_i,t_i\}$.
Then $s\vee t\in\GF$ and
\begin{equation}\label{eq:join}
 \tau_s\vee\tau_t=\tau_{s\vee t}.
\end{equation}
In particular, $\LF$ is closed under nonempty finite joins.
\end{theorem}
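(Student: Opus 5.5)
First I check that $s\vee t\in\GF$. Each $\max\{s_i,t_i\}$ is $\tau$-continuous with values in $[0,1]$. Its zero set is $\{s_i=0\}\cap\{t_i=0\}=\{a_i\}$. For compatibility, the pointwise inequality
\[
 \max\{\max\{s_i,t_i\},\max\{s_j,t_j\}\}\ge\max\{s_i,s_j\}
\]
gives $c_{ij}(s\vee t)\ge c_{ij}(s)>0$. So $s\vee t\in\GF$, and $\tau_{s\vee t}\in\LF$ by Theorem~\ref{thm:metric}.

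Next I identify $\tau_{s\vee t}$ as the least upper bound of $\tau_s$ and $\tau_t$ in $\LF$. The sublevel sets satisfy
\[
 \{\max\{s_i,t_i\}<\varepsilon\}=\{s_i<\varepsilon\}\cap\{t_i<\varepsilon\}\subseteq\{s_i<\varepsilon\}.
\]
Hence $s\preceq s\vee t$ and $t\preceq s\vee t$, and Theorem~\ref{thm:order} gives $\tau_s,\tau_t\subseteq\tau_{s\vee t}$. Conversely, let $\sigma\in\LF$ contain both $\tau_s$ and $\tau_t$. By Theorem~\ref{thm:representation} we can write $\sigma=\tau_w$ with $w\in\GF$, and Theorem~\ref{thm:order} gives $s\preceq w$ and $t\preceq w$. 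For given $i$ and $\varepsilon$, choose $\delta_1,\delta_2$ with $\{w_i<\delta_1\}\subseteq\{s_i<\varepsilon\}$ and $\{w_i<\delta_2\}\subseteq\{t_i<\varepsilon\}$. With $\delta=\min\{\delta_1,\delta_2\}$, the set $\{w_i<\delta\}$ lies in the intersection, which is exactly $\{(s\vee t)_i<\varepsilon\}$. Thus $s\vee t\preceq w$, and so $\tau_{s\vee t}\subseteq\sigma$.

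The interpretation of $\tau_s\vee\tau_t$ in \eqref{eq:join} needs care, and I expect this to be the main point. I would show that $\tau_{s\vee t}$ also equals the topology generated by $\tau_s\cup\tau_t$, the join in the lattice of all topologies. Then the join in $\LF$ agrees with the ambient join. The generated topology is contained in $\tau_{s\vee t}$ by the inclusions above.

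For the reverse inclusion, take $U\in\tau_{s\vee t}$. At a point $x\notin F$, $U$ is a $\tau$-neighborhood, and since $\tau_s$ agrees with $\tau$ near $x$, it is already a $\tau_s$-neighborhood of $x$. At $a_i\in U$, there is $\varepsilon>0$ such that $\{s_i<\varepsilon\}\cap\{t_i<\varepsilon\}\subseteq U$. Shrink $\varepsilon$ so that both sublevel sets avoid $F\setminus\{a_i\}$; then each is open in $\tau_s$, respectively $\tau_t$, as noted after \eqref{eq:raw}. So $U$ is a union of sets open in the generated topology. Closure under nonempty finite joins then follows by induction.
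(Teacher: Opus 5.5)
Your proof is correct, and its decisive step is the paper's own argument: you use $\{(s\vee t)_i<\varepsilon\}=\{s_i<\varepsilon\}\cap\{t_i<\varepsilon\}$ to match the local bases of $\tau_{s\vee t}$ at each $a_i$ with those of the topology generated by $\tau_s\cup\tau_t$, which the paper states more tersely. Your middle paragraph (least upper bound in $\LF$ via Theorems~\ref{thm:representation} and~\ref{thm:order}) is valid but redundant, since once $\tau_{s\vee t}$ is shown to be the ambient join and to lie in $\LF$, every upper bound in $\LF$ automatically contains it.
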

\begin{proof}
The zero set of $\max\{s_i,t_i\}$ is $\{a_i\}$. Moreover, for $i\ne j$,
\[
 \max\{\max(s_i,t_i),\max(s_j,t_j)\}\ge\max\{s_i,s_j\},
\]
so compatibility is preserved.
At $a_i$,
\[
 \{\max(s_i,t_i)<\varepsilon\}
   =\{s_i<\varepsilon\}\cap\{t_i<\varepsilon\}.
\]
These equal-radius intersections are cofinal among all intersections of an
$s_i$-sublevel set with a $t_i$-sublevel set. Hence they give exactly the
local base of the join topology at $a_i$. Away from $F$ there is nothing
to check.
\end{proof}

For a pair of systems define their coordinatewise minimum by
\[
 (s\wedge_0 t)_i=\min\{s_i,t_i\}.
\]
The notation $\wedge_0$ is used because this system need not be compatible.
Notice that each coordinate still has the correct singleton zero set;
only compatibility can fail. Hence the raw gauge topology $\nu_m$ is
always defined.

\begin{lemma}[Raw minimum identity]\label{lem:minimum}
For $s,t\in\GF$ and $m=s\wedge_0t$,
\begin{equation}\label{eq:raw-minimum}
 \tau_s\cap\tau_t=\nu_m.
\end{equation}
No compatibility assumption on $m$ is required.
\end{lemma}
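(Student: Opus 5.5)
We prove the identity \eqref{eq:raw-minimum} by checking both inclusions directly from the definition of the raw gauge topology \eqref{eq:raw}. Throughout, $m_i=\min\{s_i,t_i\}$; it is $\tau$-continuous with $m_i^{-1}(0)=\{a_i\}$, so $\nu_m$ is defined. The key identity is
\[
 \{m_i<\varepsilon\}=\{s_i<\varepsilon\}\cup\{t_i<\varepsilon\}\qquad(\varepsilon>0).
\]
Everything reduces to this and to the fact that $\tau_s$, $\tau_t$ and $\nu_m$ consist of $\tau$-open sets satisfying a sublevel condition at each exceptional point they contain.

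For $\tau_s\cap\tau_t\subseteq\nu_m$, let $U$ be open in both topologies. Then $U\in\tau$. For each $a_i\in U$ choose $\varepsilon,\varepsilon'>0$ with $\{s_i<\varepsilon\}\subseteq U$ and $\{t_i<\varepsilon'\}\subseteq U$. Put $\eta=\min\{\varepsilon,\varepsilon'\}$. Then
\[
 \{m_i<\eta\}=\{s_i<\eta\}\cup\{t_i<\eta\}\subseteq U,
\]
so $U$ satisfies \eqref{eq:raw} for $m$.

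For $\nu_m\subseteq\tau_s\cap\tau_t$, let $U\in\nu_m$. Then $U\in\tau$. For $a_i\in U$ there is $\varepsilon>0$ with $\{m_i<\varepsilon\}\subseteq U$. Since $m_i\le s_i$, we get $\{s_i<\varepsilon\}\subseteq\{m_i<\varepsilon\}\subseteq U$, and the same holds for $t_i$. Hence $U$ satisfies the defining condition of $\tau_s=\nu_s$ and of $\tau_t=\nu_t$.

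The one point needing care is that $\tau_s$ is defined as $\nu_s$ through the same open-set condition \eqref{eq:raw}, not merely through neighborhood bases. We therefore use that definition verbatim, and the argument never invokes Hausdorffness or compatibility of $m$. No step is a genuine obstacle. The proof is a two-line set-theoretic verification in each direction, and the only thing to watch is using a common radius $\eta$ in the first inclusion.
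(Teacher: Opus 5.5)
Your proof is correct and follows the paper's argument: both inclusions are checked directly from the defining open-set condition \eqref{eq:raw}. The first uses the union identity $\{m_i<\eta\}=\{s_i<\eta\}\cup\{t_i<\eta\}$ with the common radius $\eta=\min\{\varepsilon,\varepsilon'\}$, and the second uses $m_i\le s_i$ and $m_i\le t_i$.
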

\begin{proof}
Let $U\subseteq X$. If $U\in\tau_s\cap\tau_t$, then $U\in\tau$
and, for every $i$ with $a_i\in U$, there are $\varepsilon_i,\delta_i>0$
such that
\[
 \{s_i<\varepsilon_i\}\subseteq U,\qquad
 \{t_i<\delta_i\}\subseteq U.
\]
Putting $\eta_i=\min\{\varepsilon_i,\delta_i\}$ gives
\[
 \{m_i<\eta_i\}=\{s_i<\eta_i\}\cup\{t_i<\eta_i\}\subseteq U,
\]
so $U\in\nu_m$. Conversely, if $U\in\nu_m$ and $a_i\in U$,
then some $\{m_i<\eta_i\}$ lies in $U$; since this set contains both
$\{s_i<\eta_i\}$ and $\{t_i<\eta_i\}$, the set $U$ belongs to
both $\tau_s$ and $\tau_t$.
\end{proof}

\begin{lemma}[Crossed-sequence criterion]\label{lem:crossed}
Let $s,t\in\GF$ and put $m_i=\min\{s_i,t_i\}$. The minimum system is
incompatible if and only if there are distinct indices $i,j$ and a
sequence $(x_n)$ such that, after possibly interchanging $s$ and $t$,
\begin{equation}\label{eq:crossed}
 s_i(x_n)\longrightarrow0,\qquad t_j(x_n)\longrightarrow0.
\end{equation}
Equivalently, $x_n\to a_i$ in $\tau_s$ and $x_n\to a_j$ in $\tau_t$.
\end{lemma}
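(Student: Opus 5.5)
The plan is to unwind the definition of incompatibility for $m$ and use the compatibility of $s$ and $t$ separately to force the two coordinates that go small to come from different systems. By definition, $m$ is incompatible exactly when there are $i\ne j$ with $\inf_X\max\{m_i,m_j\}=0$. That happens if and only if there is a sequence $(x_n)$ with $m_i(x_n)\to0$ and $m_j(x_n)\to0$.

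For the forward direction, fix such $i,j,(x_n)$. For each $n$, $m_i(x_n)$ equals $s_i(x_n)$ or $t_i(x_n)$, and similarly for $j$. Passing to a subsequence, we may assume the same choice is made for all $n$ at each of the two coordinates, giving four cases:
\begin{itemize}
\item[] If both minima come from $s$, then $\max\{s_i(x_n),s_j(x_n)\}\to0$, contradicting $c_{ij}(s)>0$.
\item[] If both come from $t$, the same argument contradicts $c_{ij}(t)>0$.
\item[] If $m_i=s_i$ and $m_j=t_j$ along the subsequence, we get \eqref{eq:crossed} directly.
\item[] In the remaining case, $t_i(x_n)\to0$ and $s_j(x_n)\to0$, which is \eqref{eq:crossed} after interchanging $s$ and $t$.
\end{itemize}

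Conversely, given \eqref{eq:crossed}, we have $m_i(x_n)\le s_i(x_n)\to0$ and $m_j(x_n)\le t_j(x_n)\to0$. Hence $\inf_X\max\{m_i,m_j\}=0$, so $m$ is incompatible.

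For the equivalent reformulation, I will show that $s_i(x_n)\to0$ iff $x_n\to a_i$ in $\tau_s$. The sublevel sets $\{s_i<\varepsilon\}$ form a $\tau_s$-neighborhood base at $a_i$, as noted after \eqref{eq:raw}: every sublevel set is a neighborhood and every neighborhood contains one. Hence convergence to $a_i$ in $\tau_s$ means exactly that $x_n$ eventually lies in each $\{s_i<\varepsilon\}$, i.e.\ $s_i(x_n)\to0$. The same holds for $t_j$ and $\tau_t$.

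The only mildly delicate point is the subsequence extraction in the forward direction, which reduces to finitely many cases by pigeonhole. Everything else is direct.
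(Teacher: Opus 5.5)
Your proposal is correct and follows essentially the same route as the paper. You prove the converse from the inequalities $m_i\le s_i$ and $m_j\le t_j$. For the forward direction you run a pigeonhole argument over the four realization patterns of the two minima, ruling out the two non-crossed patterns by compatibility of $s$ and of $t$. You then read off the equivalent convergence statement from the sublevel neighborhood bases.
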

\begin{proof}
A crossed sequence as in \eqref{eq:crossed} gives $m_i(x_n)\to0$
and $m_j(x_n)\to0$, so the minimum system is incompatible.
Conversely, if it is incompatible, then for some $i\ne j$ there is
a sequence with $m_i(x_n)\to0$ and $m_j(x_n)\to0$.
For each $n$, each minimum is realized by one of its two entries.
Passing to a subsequence, one of the four realization patterns is
constant. The two non-crossed patterns are impossible by compatibility
of $s$ and $t$, respectively. Hence one of the crossed patterns remains.
The final equivalence follows from the sublevel neighborhood bases.
\end{proof}

\begin{theorem}[Meet criterion and failure of Hausdorff lower bounds]\label{thm:meet}
Let $s,t\in\GF$ and put $m=s\wedge_0t$, $m_i=\min\{s_i,t_i\}$.
The following are equivalent:
\begin{enumerate}
\item $m\in\GF$;
\item $\tau_s\cap\tau_t$ is Hausdorff;
\item $\tau_s$ and $\tau_t$ have a common Hausdorff lower bound;
\item $\tau_s$ and $\tau_t$ have a meet in $\LF$.
\end{enumerate}
When these conditions hold,
\begin{equation}\label{eq:meet}
 \tau_s\wedge\tau_t=\tau_m=\tau_s\cap\tau_t.
\end{equation}
If they fail, every topology on $X$ which is coarser than both $\tau_s$
and $\tau_t$ is non-Hausdorff.
\end{theorem}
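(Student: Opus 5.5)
The plan is to chain the equivalences through Lemma~\ref{lem:minimum}, which identifies $\tau_s\cap\tau_t$ with the raw gauge topology $\nu_m$, together with Proposition~\ref{prop:hausdorff} and Corollary~\ref{cor:hausdorff-metric}. Since each $m_i$ is $\tau$-continuous, takes values in $[0,1]$ and has zero set exactly $\{a_i\}$, the tuple $m$ is a legitimate raw gauge tuple. Proposition~\ref{prop:hausdorff} then gives (i)$\Leftrightarrow$ ``$\nu_m$ is Hausdorff''. Combined with \eqref{eq:raw-minimum}, this is (i)$\Leftrightarrow$(ii).

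For (ii)$\Leftrightarrow$(iii) I will use the general order fact. If (ii) holds, then $\tau_s\cap\tau_t$ is itself a common Hausdorff lower bound. Conversely, let $\mu\subseteq\tau_s$ and $\mu\subseteq\tau_t$ be Hausdorff. Then $\mu\subseteq\tau_s\cap\tau_t$, and any topology finer than a Hausdorff topology is Hausdorff. Taking the contrapositive also gives the final sentence of the theorem: if the conditions fail, then $\tau_s\cap\tau_t$ is not Hausdorff, and every common lower topology lies inside it, so none is Hausdorff.

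For (i)$\Rightarrow$(iv) and \eqref{eq:meet}: when $m\in\GF$, Theorem~\ref{thm:metric} shows $\tau_m\in\LF$, and by Lemma~\ref{lem:minimum} we have $\tau_m=\nu_m=\tau_s\cap\tau_t$. Every member of $\LF$ below both $\tau_s$ and $\tau_t$ is contained in the intersection. So $\tau_m$ is the greatest lower bound in $\LF$, and it coincides with the set-theoretic intersection. For (iv)$\Rightarrow$(iii), a meet in $\LF$ is in particular a metrizable, hence Hausdorff, common lower bound.

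The main point needing care is that (iv) is meaningful only because the meet, if it exists, lies in $\LF$ and is therefore Hausdorff. The subtlety is not that the meet equals the intersection, but just that its existence supplies a Hausdorff lower bound; the cycle (i)$\Rightarrow$(iv)$\Rightarrow$(iii)$\Rightarrow$(ii)$\Rightarrow$(i) then closes. I expect no genuine obstacle beyond checking that $\nu_m$ is well defined without compatibility, which the text preceding Lemma~\ref{lem:minimum} already records.
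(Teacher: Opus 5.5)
Your proposal is correct and follows the paper's proof step for step: Lemma~\ref{lem:minimum} with Proposition~\ref{prop:hausdorff} gives (i)$\Leftrightarrow$(ii), the general order fact gives (ii)$\Leftrightarrow$(iii), Theorem~\ref{thm:metric} gives (i)$\Rightarrow$(iv) and \eqref{eq:meet}, and Hausdorffness of members of $\LF$ gives (iv)$\Rightarrow$(iii). The only difference is that the paper also cites Lemma~\ref{lem:crossed} as a concrete crossed-sequence witness for the final assertion; your contrapositive argument does not need it.
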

\begin{proof}
By Lemma~\ref{lem:minimum}, $\tau_s\cap\tau_t=\nu_m$.
Proposition~\ref{prop:hausdorff} therefore gives
(i)$\Leftrightarrow$(ii). If (ii) holds, the intersection topology
itself is a common Hausdorff lower bound, so (ii)$\Rightarrow$(iii).
Conversely, if $\lambda$ is a common Hausdorff lower bound, then
$\lambda\subseteq\tau_s\cap\tau_t$. Every topology finer than a
Hausdorff topology is Hausdorff, so the intersection is Hausdorff;
hence (iii)$\Rightarrow$(ii).

If (i) holds, Theorem~\ref{thm:metric} shows that
$\nu_m=\tau_m$ belongs to $\LF$. Since it is the set-theoretic
intersection of the two topologies, it is their meet in $\LF$,
proving (i)$\Rightarrow$(iv) and \eqref{eq:meet}.
Finally, (iv)$\Rightarrow$(iii) because every member of $\LF$
is Hausdorff.

If the equivalent conditions fail and $\lambda$ is any common lower
topology, then $\lambda$ cannot be Hausdorff, for otherwise (iii)
would hold. Lemma~\ref{lem:crossed} supplies a concrete witness:
after relabeling, a sequence converges to distinct exceptional points
in the two original topologies, and hence to both points in every
common lower topology.
\end{proof}

\begin{corollary}\label{cor:existential}
For a one-point exceptional set, the family $\LF$ is a distributive lattice,
as proved in \cite[Corollary~3.6]{JS}. For every integer $k\ge2$, there exist
a metrizable space $(X,\tau)$, a set $F\subseteq X$ with $|F|=k$, and two
elements of $\LF$ having no common Hausdorff lower bound.
Thus the analogous meet property fails in general as soon as two
exceptional points are permitted.
\end{corollary}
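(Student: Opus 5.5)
The one-point assertion is quoted directly from \cite[Corollary~3.6]{JS}, so the work is the existence statement for $k\ge2$. I would build a single explicit example and then apply Theorem~\ref{thm:meet}. By the criterion (i)$\Leftrightarrow$(iii) there, it is enough to find $s,t\in\GF$ whose coordinatewise minimum $m=s\wedge_0 t$ is incompatible. By Lemma~\ref{lem:crossed}, this is the same as producing a sequence $(x_n)$ with $s_1(x_n)\to0$ and $t_2(x_n)\to0$.

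For the space, take $X=\N$ with the discrete topology $\tau$ and $F=\{1,2,\ldots,k\}$, so $a_i=i$. Let $D=\{x_n:n\ge1\}=\{n+k:n\ge1\}$ be the remaining points. Define $s$ by
\[
 s_1(x_n)=\tfrac1n,\qquad s_1(a_\ell)=1\ (\ell\ne1),\qquad s_1(a_1)=0,
\]
and $s_i=1-\mathbf 1_{\{a_i\}}$ for $i\ge2$. Define $t$ the same way with the roles of indices $1$ and $2$ exchanged: $t_2(x_n)=1/n$, $t_2=1$ at the other points of $F$, $t_2(a_2)=0$, and $t_i=1-\mathbf 1_{\{a_i\}}$ for $i\ne2$.

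The checks are then routine. Every function on a discrete space is continuous, and each $s_i$ and $t_i$ has values in $[0,1]$ with zero set exactly $\{a_i\}$. For compatibility of $s$, fix $i\ne j$. At least one of $i,j$, say $j$, is different from $1$, and $s_j\ge1$ off $a_j$. At the point $a_j$ itself we have $s_i(a_j)=1$. Hence $\max\{s_i,s_j\}\ge1$ everywhere, so $c_{ij}(s)=1$. The same argument applies to $t$. Thus $s,t\in\GF$, and Theorem~\ref{thm:metric} (or Theorem~\ref{thm:representation}) shows that $\tau_s,\tau_t\in\LF$.

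Along the sequence, $\min\{s_1,t_1\}(x_n)\le s_1(x_n)\to0$ and $\min\{s_2,t_2\}(x_n)\le t_2(x_n)\to0$. So $m$ violates \eqref{eq:compat} for the pair $(1,2)$. Theorem~\ref{thm:meet} then says that every common lower topology of $\tau_s$ and $\tau_t$ is non-Hausdorff: the sequence $x_n$ would converge to both $a_1$ and $a_2$ in any such topology.

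I expect no real obstacle. The only point to get right is the choice of values at the other exceptional points, so that compatibility of $s$ and $t$ holds on all of $X$, including at $F$. Setting those values to $1$ handles this. One could instead cite Kuba's compact example for $k=2$, but the discrete construction covers every $k\ge2$ uniformly and is self-contained.
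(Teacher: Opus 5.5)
Your proposal is correct and is essentially the paper's argument: Example~\ref{ex:crossed} also uses a discrete space with a crossed sequence and concludes by Theorem~\ref{thm:meet}. The only difference is that you build all $k$ centers into one discrete space from the start, isolating the extra centers with the gauges $1-\mathbf 1_{\{a_i\}}$, and apply Theorem~\ref{thm:meet} directly to the $k$-point systems; the paper instead treats $k=2$ first and reaches $k>2$ by a topological sum with isolated points plus a restriction argument, a step your version avoids.
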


\begin{example}[Crossed convergence on a discrete space]\label{ex:crossed}
Let $X=\{a,b\}\cup\{x_n:n\in\N\}$ with the discrete topology and let
$F=\{a,b\}$. Define two compatible systems $s=(s_a,s_b)$ and
$t=(t_a,t_b)$ by
\[
 \renewcommand{\arraystretch}{1.3}
 \begin{array}{c|ccc}
       &a&b&x_n\\ \hline
 s_a&0&1&\dfrac1{n+1}\\
 s_b&1&0&1\\
 t_a&0&1&1\\
 t_b&1&0&\dfrac1{n+1}
 \end{array}
\]
Then $x_n\to a$ in $\tau_s$ and $x_n\to b$ in $\tau_t$.
The minimum system satisfies
\[
 m_a(x_n)=m_b(x_n)=\frac1{n+1},
\]
so it is not compatible. By Theorem~\ref{thm:meet}, the two topologies
have no common Hausdorff lower bound.
For any $k>2$, take the topological sum of this example with $k-2$
isolated singleton spaces, add those new points to $F$, and extend both
coarsenings by the discrete topology on the added points.
Any common Hausdorff lower bound of the enlarged pair would restrict
to one for the original two-point example, which is impossible.
This also proves the $k\ge2$ assertion in Corollary~\ref{cor:existential}.
\end{example}

Theorem~\ref{thm:universal} below strengthens this existence statement:
the obstruction occurs for every noncompact metrizable space and every
finite exceptional set with at least two points.

\section{Strictness and compactness}
The one-point closed-discrete criterion
\cite[Theorem~4.3]{JS} extends directly to finite exceptional sets.

\begin{theorem}[Closed-discrete strictness criterion]\label{thm:strictness}
Let $s\in\GF$ and put $\sigma=\tau_s$. The following are equivalent
(with the convention $\inf\varnothing=+\infty$):
\begin{enumerate}
\item $\sigma\subsetneq\tau$;
\item for some $i$ there is a $\tau$-open neighborhood $U$ of $a_i$ such that
      \[
       \inf_{x\in X\setminus U}s_i(x)=0;
      \]
\item for some $i$ there is a countably infinite $\tau$-closed discrete set
      \[
       D=\{x_n:n\in\N\}\subseteq X\setminus F
      \]
      such that $x_n\to a_i$ in $\sigma$.
\end{enumerate}
\end{theorem}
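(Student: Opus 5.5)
I will prove the cycle (i)$\Rightarrow$(ii)$\Rightarrow$(iii)$\Rightarrow$(i), working throughout with the explicit local bases: $\sigma=\tau_s$ agrees with $\tau$ off $F$, and at $a_i$ its neighborhood base is the family of sublevel sets $\{s_i<\varepsilon\}$.

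\textbf{(i)$\Rightarrow$(ii).} Suppose $\sigma\subsetneq\tau$ and pick $V\in\tau\setminus\sigma$. Since the two topologies agree on the open set $X\setminus F$, the failure of $\sigma$-openness must occur at some $a_i\in V$. This means that no sublevel set $\{s_i<\varepsilon\}$ is contained in $V$. Now choose a $\tau$-open $U\subseteq V$ with $a_i\in U$; shrinking $U$ is harmless. For every $\varepsilon>0$ there is then a point $x\notin V\supseteq U$ with $s_i(x)<\varepsilon$. Hence $\inf_{X\setminus U}s_i=0$, which is (ii).

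\textbf{(ii)$\Rightarrow$(iii).} This is the main step. Take $x_n\in X\setminus U$ with $s_i(x_n)\to 0$. I first discard the points of $F$: $s_i(a_i)=0$ but $a_i\in U$, and $s_i(a_j)>0$ for $j\ne i$, so only finitely many terms can equal some $a_j$. After dropping those, $x_n\in X\setminus F$. The $x_n$ are pairwise distinct after passing to a subsequence, because $s_i(x_n)\to0$ while $s_i(y)>0$ for every $y\ne a_i$.

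Next I claim that $(x_n)$ has no $\tau$-cluster point. If $x$ were a $\tau$-cluster point, then $x\notin U$, since $X\setminus U$ is $\tau$-closed. Continuity of $s_i$ would give $s_i(x)=0$, forcing $x=a_i\in U$, a contradiction. In a metrizable space, a sequence of distinct points with no cluster point has a $\tau$-closed discrete range. Take $D=\{x_n\}$.

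Finally, $s_i(x_n)\to0$ together with the sublevel base gives $x_n\to a_i$ in $\sigma$. I expect the only delicate point to be verifying closed discreteness correctly via cluster points, and handling the removal of $F$ and of repetitions.

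\textbf{(iii)$\Rightarrow$(i).} Since $D$ is $\tau$-closed and $a_i\notin D$, the set $X\setminus D$ is a $\tau$-open neighborhood of $a_i$. It cannot be $\sigma$-open, because $x_n\to a_i$ in $\sigma$ while every $x_n$ lies outside it. Hence $\tau\ne\sigma$, and $\sigma\subseteq\tau$ holds already, so $\sigma\subsetneq\tau$.
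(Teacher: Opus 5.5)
Your proof is correct and follows the paper's argument essentially step for step. The shared steps are the sublevel-set reading of $\sigma$-openness for (i)$\Rightarrow$(ii), the extraction of distinct points of $X\setminus(U\cup F)$ with $s_i\to0$ for (ii)$\Rightarrow$(iii), and the $\tau$-open neighborhood $X\setminus D$ of $a_i$ for (iii)$\Rightarrow$(i). The only cosmetic difference is how you rule out a $\tau$-cluster point: you use continuity of $s_i$, which would force the cluster point to be $a_i\in U$, whereas the paper uses uniqueness of limits in the Hausdorff space $(X,\sigma)$; both arguments are valid.
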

\begin{proof}
The equality $\sigma=\tau$ holds exactly when, for every $i$, each
$\tau$-neighborhood $U$ of $a_i$ contains an $s_i$-sublevel set.
Since $s_i$ is positive on $X\setminus\{a_i\}$, this is equivalent to
$\inf_{X\setminus U}s_i>0$ for every such $U$.
This proves (i)$\Leftrightarrow$(ii).

Assume (ii). Shrinking $U$ if necessary, we may suppose that
$U\cap F=\{a_i\}$. This does not destroy the condition
$\inf_{X\setminus U}s_i=0$, because shrinking $U$ only enlarges its
complement. Choose $y_n\in X\setminus U$ with $s_i(y_n)<1/n$.
The range is infinite and each point occurs only finitely often,
because $s_i$ is strictly positive at every point of $X\setminus U$.
Passing to a subsequence, obtain distinct points $x_n\in X\setminus U$
with $s_i(x_n)\to0$. Eventually $x_n\notin F$, so we may delete finitely
many terms and assume $D=\{x_n:n\in\N\}\subseteq X\setminus F$.

Then $x_n\to a_i$ in $\sigma$. If $D$ had a $\tau$-accumulation
point $b$, metrizability of $\tau$ would give a subsequence converging
to $b$ in $\tau$, hence also in $\sigma$ because $\sigma\subseteq\tau$.
Since $b\in X\setminus U$, we have $b\ne a_i$, contradicting
uniqueness of limits in the Hausdorff space $(X,\sigma)$.
Thus $D$ is $\tau$-closed discrete, proving (iii).

If (iii) holds, the $\tau$-open neighborhood $X\setminus D$ of $a_i$
contains none of the $x_n$, so the sequence does not converge to $a_i$
in $\tau$. Hence $\sigma\ne\tau$.
\end{proof}

\begin{lemma}[A strict finite-point coarsening of a noncompact space]\label{lem:strict}
Let $(X,\tau)$ be noncompact and metrizable, and let
$F=\{a_1,\ldots,a_k\}$ be finite and nonempty.
Then $\LF$ contains a strict member.
\end{lemma}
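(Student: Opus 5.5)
Since $(X,\tau)$ is metrizable and noncompact, it is not countably compact. It therefore contains a countably infinite $\tau$-closed discrete set $D_0$. Removing the finitely many points of $F$ leaves a set $D=\{x_n:n\in\N\}\subseteq X\setminus F$ of distinct points, still closed and discrete. The plan is to make $D$ converge to $a_1$ in a new topology, using Theorem~\ref{thm:strictness} (iii)$\Rightarrow$(i) to detect strictness. Concretely, I will build a compatible gauge system $s\in\GF$ with $s_1(x_n)\to0$. Then Theorem~\ref{thm:metric} gives $\tau_s\in\LF$, and Theorem~\ref{thm:strictness} shows $\tau_s\subsetneq\tau$.

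\textbf{Construction.} Fix a compatible metric $d\le1$ for $\tau$. Let $\delta>0$ be less than half the minimum $d$-distance between distinct points of $F$, so the balls $B(a_i,\delta)$ are pairwise disjoint.

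Because $D$ is closed discrete and misses $F$, I can choose radii $0<r_n<\min\{1/n,\tfrac12 d(x_n,F)\}$ so that the balls $B_n=B(x_n,r_n)$ satisfy three conditions: they are pairwise disjoint, they avoid $\overline{B(a_i,\delta/2)}$ for each $i$, and the family $\{B_n\}$ is locally finite. Local finiteness is the one delicate point. If needed, I will get it by passing to a subsequence, since a closed discrete set in a metric space admits a discrete family of open neighborhoods (collectionwise normality). I expect this step to be the main technical obstacle, and I will invoke that standard fact rather than re-derive it.

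Set $g(x)=\sup_n\max\{0,1-d(x,x_n)/r_n\}$. Only one term is nonzero near any point, so $g$ is continuous. It satisfies $g(x_n)=1$, and $g$ vanishes off $\bigcup B_n$.

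\textbf{Gauges.} I define
\[
 s_1(x)=\min\Bigl\{1,\ \frac{d(x,a_1)}{\delta}\Bigr\}\cdot\Bigl(1-g(x)\bigl(1-\tfrac1n\bigr)\Bigr)
\]
on $B_n$, and $s_1(x)=\min\{1,d(x,a_1)/\delta\}$ off $\bigcup B_n$. Equivalently, I replace the factor by $1-\sum_n g_n(1-1/n)$, where $g_n$ is the tent on $B_n$; this is continuous by local finiteness. This gives $s_1(x_n)\le 1/n$, and $s_1$ vanishes exactly at $a_1$, since the second factor is at least $1/n>0$. For $i\ge2$ I take $s_i(x)=\min\{1,d(x,a_i)/\delta\}$.

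\textbf{Compatibility.} If $s_i(x)<1/2$, then $d(x,a_i)<\delta/2$, so $x$ lies outside every $B_n$ and outside $B(a_j,\delta)$ for $j\ne i$. Hence $s_j(x)=1$ there, whichever index is $1$. Thus $\max\{s_i,s_j\}\ge1/2$ for all $i\ne j$, and $s\in\GF$.

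Finally, $x_n\to a_1$ in $\tau_s$ because $s_1(x_n)\to0$. Then Theorem~\ref{thm:strictness} (iii)$\Rightarrow$(i) gives strictness, using either the set $D$ itself or its tail, which is still closed discrete. This strict member is the one required.
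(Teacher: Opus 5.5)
\textbf{Main gap: the case $k=1$.} Your strategy matches the paper's: a $\tau$-closed discrete $D$, a first gauge forced small along $D$, distance gauges at the other centers, Theorem~\ref{thm:metric} for metrizability, and non-convergence in $\tau$ for strictness. As written, however, it does not prove the case $k=1$. The lemma explicitly allows that case, and Corollary~\ref{cor:compactness} relies on it.

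Your only source of metrizability is Theorem~\ref{thm:metric}, and that theorem degenerates for a single center. There $\Delta^0=\{e_1\}$ and $P_1$ is an empty product, so $U_s\equiv e_1$, condition \eqref{eq:fibers} fails, and $\rho_s\equiv0$. Your $\delta$, defined through distinct points of $F$, is also vacuous when $k=1$. So nothing in your argument shows that $\tau_s$ is metrizable in that case.

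The missing step is the paper's auxiliary point. Pick $b\ne a_1$ and remove it from $D$ if necessary. Run your construction on $E=\{a_1,b\}$ with the plain gauge $\min\{1,d(\cdot,b)/\delta\}$ at $b$, and apply Theorem~\ref{thm:metric} to $E$. The sublevel sets of that gauge are $d$-balls, so $\tau_s$ agrees with $\tau$ at $b$ as well as off $E$. Hence $\tau_s$ differs from $\tau$ only at $a_1$ and lies in $\mathcal L^m_{\{a_1\}}(\tau)$. Citing the one-point metrization of \cite{JS} would also work.

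\textbf{Smaller slips for $k\ge2$.} First, $\delta$ is constrained only by the separation of $F$, so requiring every $B_n$ to avoid $\overline{B(a_i,\delta/2)}$ can be impossible. All $x_n$ may lie within $\delta/2$ of $a_1$: in $\ell^2$, the points $c\,e_n$ with $0<c<\delta/2$ form a closed discrete set inside $B(0,\delta/2)$. Passing to a subsequence does not help. The fix is to also require $\delta\le d(F,D)$, which is positive because $D$ is closed and misses $F$. Then $r_n<\tfrac12 d(x_n,F)$ gives the avoidance.

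Second, $s_i(x)<\tfrac12$ does not imply $d(x,a_i)<\delta/2$ when $i=1$ (take $x=x_n$). So in each pair you must argue from the index different from $1$; that suffices.

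Third, local finiteness is not a real obstacle. The bound $r_n<1/n$ already forces it: a point every neighborhood of which meets infinitely many $B_n$ would be the limit of a subsequence of $(x_n)$.

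With these repairs your $k\ge2$ argument is correct. The paper avoids this bookkeeping by taking $s_1=\min\{d(\cdot,a_1),h\}$ with $h=\inf_n\{2^{-n}+d(\cdot,x_n)\}$. This $h$ is $1$-Lipschitz and positive, and compatibility then follows from a one-line continuity argument.
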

\begin{proof}
Choose a compatible metric $d\le1$ for $\tau$. We first arrange an
auxiliary exceptional set with at least two points.
If $k\ge2$, put $E=F$. If $k=1$, choose $b\in X\setminus\{a_1\}$
and put $E=\{a_1,b\}$. Such a point exists because a noncompact
space is not a singleton. Write
\[
 E=\{b_1,\ldots,b_m\},\qquad b_1=a_1,\quad m\ge2.
\]
Since a metrizable space is compact if and only if it is sequentially
compact, there is a sequence with no convergent subsequence.
Its range is infinite and has no accumulation point: otherwise first
countability would produce a convergent subsequence.
After removing the finite set $E$ and passing to an infinite subset of
the range, we obtain a countably infinite $\tau$-closed discrete set
$D=\{x_n:n\in\N\}\subseteq X\setminus E$.
Put $\lambda_n=2^{-n}$ and define
\[
 h(x)=\inf_{n\ge1}\{\lambda_n+d(x,x_n)\},\qquad
 s_1(x)=\min\{d(x,b_1),h(x)\}.
\]
Each function $x\mapsto\lambda_n+d(x,x_n)$ is $1$-Lipschitz,
hence so is $h$. Moreover $h>0$ on $X$: if $x\notin D$,
closedness of $D$ gives $d(x,D)>0$, while for $x=x_m$
discreteness of $D$ gives $\eta_m>0$ with
$d(x_m,x_n)\ge\eta_m$ for $n\ne m$, and therefore
\[
 h(x_m)\ge\min\{\lambda_m,\eta_m\}>0.
\]
Thus $s_1^{-1}(0)=\{b_1\}$ and $s_1(x_n)\le\lambda_n\to0$.
For $j=2,\ldots,m$, set $s_j(x)=d(x,b_j)$.
Then every $s_j$ is continuous, takes values in $[0,1]$,
and vanishes exactly at $b_j$. The system $s=(s_1,\ldots,s_m)$
is compatible. Indeed, for $2\le i<j\le m$, the triangle inequality gives
\[
 \max\{s_i(x),s_j(x)\}\ge\frac12d(b_i,b_j)
\]
for every $x$. If compatibility failed for $s_1$ and some $s_j$,
$j\ge2$, there would be a sequence $(y_r)$ with $s_1(y_r)\to0$
and $d(y_r,b_j)\to0$; then $y_r\to b_j$ in $\tau$, and continuity
of $s_1$ would force $s_1(b_j)=0$, contrary to $b_j\ne b_1$.
Hence $s$ is compatible.

Apply Theorem~\ref{thm:metric} to the exceptional set $E$.
The resulting topology $\tau_s$ agrees with $\tau$ on $X\setminus E$.
At every $b_j$ with $j\ge2$, the gauge $s_j=d(\,\cdot\,,b_j)$
has the original metric balls as its sublevel sets, so the local
topology there is also exactly $\tau$. Consequently $\tau_s$ differs
from $\tau$ only possibly at $b_1=a_1$, and therefore $\tau_s\in\LF$
even in the case $k=1$.

Finally, $s_1(x_n)\to0$, so $x_n\to a_1$ in $\tau_s$.
This convergence does not hold in $\tau$, because the $\tau$-open
neighborhood $X\setminus D$ of $a_1$ contains no $x_n$.
Therefore $\tau_s\subsetneq\tau$.
\end{proof}

\begin{corollary}[Localized compactness characterization]\label{cor:compactness}
Let $(X,\tau)$ be metrizable and let $F\subseteq X$ be finite and nonempty.
The following are equivalent:
\begin{enumerate}
\item $(X,\tau)$ is noncompact;
\item $\LF$ contains a strict member.
\end{enumerate}
\end{corollary}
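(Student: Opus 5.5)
The direction (i)$\Rightarrow$(ii) is exactly Lemma~\ref{lem:strict}, so I would simply cite it. All the work is in (ii)$\Rightarrow$(i), which I would prove by contraposition: assume $(X,\tau)$ is compact and show that every $\sigma\in\LF$ equals $\tau$.

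The first and simplest route uses the standard fact already quoted in the introduction: a compact Hausdorff topology has no strictly coarser Hausdorff topology. I would recall its short proof so the argument is self-contained. Let $\sigma\in\LF$; it is Hausdorff because it is metrizable, and $\sigma\subseteq\tau$. Then the identity $(X,\tau)\to(X,\sigma)$ is a continuous bijection from a compact space onto a Hausdorff space. Such a map is closed, since $\tau$-closed sets are $\tau$-compact, their images are $\sigma$-compact, and compact subsets of a Hausdorff space are closed. A closed continuous bijection is a homeomorphism, so $\sigma=\tau$. Hence $\LF=\{\tau\}$ has no strict member.

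As a cross-check within the paper's own machinery, I would also note how Theorem~\ref{thm:strictness} gives the same conclusion. By Theorem~\ref{thm:representation}, write $\sigma=\tau_s$ with $s\in\GF$. If $\sigma\subsetneq\tau$, condition (iii) of that theorem produces a countably infinite $\tau$-closed discrete set $D$. In a compact space every closed discrete set is finite, which is a contradiction.

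There is no real obstacle here. The only points to verify are that nonemptiness of $F$ is what allows Lemma~\ref{lem:strict} to apply, and that no Hausdorff assumption beyond metrizability of $\sigma$ is needed; the compact-to-Hausdorff argument does not use finiteness of $F$ at all.
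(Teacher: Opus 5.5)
Your main argument matches the paper's proof: you cite Lemma~\ref{lem:strict} for (i)$\Rightarrow$(ii), and for the converse you note that, when $\tau$ is compact, the identity $(X,\tau)\to(X,\sigma)$ is a continuous bijection onto a Hausdorff space, hence a homeomorphism, so $\LF=\{\tau\}$. Your cross-check via Theorem~\ref{thm:strictness} is also sound, with one caveat: Theorem~\ref{thm:representation} is stated only for $k\ge2$, so when $|F|=1$ you would need the one-point representation, or the observation that the same proof goes through verbatim.
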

\begin{proof}
The implication (i)$\Rightarrow$(ii) is Lemma~\ref{lem:strict}.
Conversely, if $\tau$ is compact and $\sigma\in\LF$, then the identity
map $(X,\tau)\to(X,\sigma)$ is a continuous bijection from a compact
space onto a Hausdorff space, hence a homeomorphism.
Therefore $\sigma=\tau$.
\end{proof}

The same closed-discrete set can be used to force convergence to two
different prescribed centers in two different coarsenings.
This yields a universal form of the obstruction in Section~5.
It also follows from the prescribed-set version of
\cite[Theorem~1.1]{JS}; the proof below realizes it directly by compatible
finite gauge systems.

\begin{theorem}[Universal failure of Hausdorff lower bounds]\label{thm:universal}
Let $(X,\tau)$ be a noncompact metrizable space, let $F\subseteq X$
be finite with $|F|\ge2$, and let $a,b$ be any two distinct points of $F$.
There exist strict coarsenings $\sigma_a,\sigma_b\in\LF$ and a
countably infinite $\tau$-closed discrete set
$D=\{x_n:n\in\N\}\subseteq X\setminus F$ such that
\[
 \sigma_a|_{X\setminus\{a\}}=\tau|_{X\setminus\{a\}},\qquad
 \sigma_b|_{X\setminus\{b\}}=\tau|_{X\setminus\{b\}},
\]
and
\[
 x_n\longrightarrow a\text{ in }\sigma_a,\qquad
 x_n\longrightarrow b\text{ in }\sigma_b.
\]
In particular, every topology on $X$ coarser than both $\sigma_a$
and $\sigma_b$ is non-Hausdorff.
\end{theorem}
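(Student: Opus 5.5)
The plan is to reuse the construction in the proof of Lemma~\ref{lem:strict}, running it once with center $a$ and once with center $b$, and using one and the same closed discrete set $D$ both times. First, fix a compatible metric $d\le1$ for $\tau$. Since $\tau$ is noncompact, the argument of Lemma~\ref{lem:strict} gives a countably infinite $\tau$-closed discrete set $D=\{x_n:n\in\N\}\subseteq X\setminus F$. Put $\lambda_n=2^{-n}$ and
\[
 h(x)=\inf_{n\ge1}\{\lambda_n+d(x,x_n)\}.
\]
As shown there, $h$ is $1$-Lipschitz and strictly positive on $X$, and $h(x_n)\le\lambda_n$.

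Next, write $F=\{a_1,\ldots,a_k\}$ with $a=a_1$. Define the system $s^a$ by
\[
 s^a_1=\min\{d(\cdot,a),h\},\qquad s^a_j=d(\cdot,a_j)\quad (j\ge2).
\]
Define $s^b$ the same way, with the roles of $a$ and $b$ interchanged. The compatibility check in the proof of Lemma~\ref{lem:strict} applies verbatim. Pairs of plain distance gauges are handled by the triangle inequality. A pair involving the modified gauge is handled by the continuity argument: if $s^a_1(y_r)\to0$ and $y_r\to a_j$ in $\tau$, then $s^a_1(a_j)=0$, which is impossible. Hence $s^a,s^b\in\GF$. Set $\sigma_a=\tau_{s^a}$ and $\sigma_b=\tau_{s^b}$. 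By Theorem~\ref{thm:metric} these topologies are metrizable, contained in $\tau$, and agree with $\tau$ off $F$, so they lie in $\LF$.

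At each $a_j\neq a$, the sublevel sets of $s^a_j=d(\cdot,a_j)$ are $d$-balls, so $\sigma_a$ has the $\tau$-neighborhood base there. Together with the agreement off $F$, and the fact that $X\setminus\{a\}$ is open in both topologies, this gives $\sigma_a|_{X\setminus\{a\}}=\tau|_{X\setminus\{a\}}$. The same reasoning applies to $\sigma_b$.

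For the convergence claims, $s^a_1(x_n)\le h(x_n)\le\lambda_n\to0$, so $x_n\to a$ in $\sigma_a$; likewise $x_n\to b$ in $\sigma_b$. Strictness follows as at the end of Lemma~\ref{lem:strict}, or from Theorem~\ref{thm:strictness}(iii): the $\tau$-open set $X\setminus D$ contains $a$ but no $x_n$.

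For the final assertion, let $\lambda\subseteq\sigma_a\cap\sigma_b$. Convergence persists in coarser topologies, so $x_n\to a$ and $x_n\to b$ in $\lambda$. Since $a\ne b$, $\lambda$ is not Hausdorff. Alternatively, this is the crossed-sequence situation of Lemma~\ref{lem:crossed}, and Theorem~\ref{thm:meet} applies.

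The main point needing care is ensuring that the single set $D$ works for both centers at once and avoids all of $F$. That is secured by choosing $D\subseteq X\setminus F$ at the outset. Beyond this, the compatibility verification is the only step with content, and it is already done in Lemma~\ref{lem:strict}.
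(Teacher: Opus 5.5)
Your proposal is correct and follows the paper's proof essentially step for step. The ingredients are the same: a single closed discrete set $D\subseteq X\setminus F$ and a single function $h$ shared by both systems, with the distance gauge replaced by $\min\{d(\cdot,a),h\}$ only at $a$ (respectively at $b$). Compatibility comes from the triangle inequality together with the continuity argument, strictness comes from the $\tau$-open neighborhood $X\setminus D$, and the non-Hausdorff conclusion comes from the sequence $(x_n)$ converging to both $a$ and $b$ in any common coarser topology.
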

\begin{proof}
Relabel $F=\{a_1,\ldots,a_k\}$ so that $a_1=a$ and $a_2=b$.
Choose a compatible metric $d\le1$. As in the proof of
Lemma~\ref{lem:strict}, noncompactness supplies a countably infinite
$\tau$-closed discrete set $D=\{x_n:n\in\N\}\subseteq X\setminus F$.
The function
\[
 h(x)=\inf_{n\ge1}\{2^{-n}+d(x,x_n)\}
\]
is $1$-Lipschitz and strictly positive on $X$, and $h(x_n)\le2^{-n}$.
Put $d_i(x)=d(x,a_i)$ and define
\[
 s_i(x)=
 \begin{cases}
  \min\{d_1(x),h(x)\},&i=1,\\
  d_i(x),&i\ne1,
 \end{cases}
 \qquad
 t_i(x)=
 \begin{cases}
  \min\{d_2(x),h(x)\},&i=2,\\
  d_i(x),&i\ne2.
 \end{cases}
\]
All these functions are continuous, take values in $[0,1]$, and have
the required singleton zero sets.

We check compatibility of $s$. If $i,j\ne1$ are distinct, then
$\max\{s_i(x),s_j(x)\}\ge d(a_i,a_j)/2$.
If compatibility failed for $s_1$ and $s_j$ with $j\ne1$,
there would be a sequence $(y_r)$ with $s_1(y_r)\to0$ and
$d(y_r,a_j)\to0$. Continuity of $s_1$ would imply
$s_1(a_j)=0$, whereas
$s_1(a_j)=\min\{d(a_j,a_1),h(a_j)\}>0$.
Thus $s$ is compatible. The same argument, with index $2$ in
place of $1$, proves compatibility of $t$.

By Theorem~\ref{thm:metric}, $\sigma_a:=\tau_s$ and
$\sigma_b:=\tau_t$ belong to $\LF$.
At every $a_j\ne a_1$, the $s_j$-sublevel sets are the original
$d$-balls; hence $\sigma_a$ agrees with $\tau$ off $a_1$.
Likewise $\sigma_b$ agrees with $\tau$ off $a_2$.
Moreover,
\[
 s_1(x_n)\le2^{-n},\qquad t_2(x_n)\le2^{-n},
\]
which gives the asserted convergences. Both coarsenings are strict,
because $X\setminus D$ is a $\tau$-open neighborhood of each center
and contains no term of the sequence.

If $\lambda\subseteq\sigma_a\cap\sigma_b$ is any topology on $X$,
the sequence $(x_n)$ converges to both $a_1$ and $a_2$ in $\lambda$.
Since these points are distinct, $\lambda$ is not Hausdorff.
Equivalently, this is the crossed-sequence obstruction of
Lemma~\ref{lem:crossed} and Theorem~\ref{thm:meet}.
\end{proof}

\begin{corollary}[Compactness--lattice dichotomy]\label{cor:dichotomy}
Let $(X,\tau)$ be metrizable and let $F\subseteq X$ be finite with
$|F|\ge2$. The following are equivalent:
\begin{enumerate}
\item $(X,\tau)$ is compact;
\item $\LF=\{\tau\}$;
\item every two members of $\LF$ have a common Hausdorff lower
      topology on $X$;
\item $\LF$ is downward directed, that is, every two members have
      a common lower bound belonging to $\LF$;
\item $\LF$ is a lattice.
\end{enumerate}
When these conditions hold, $\LF$ is the one-element distributive lattice.
\end{corollary}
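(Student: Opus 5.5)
The plan is to run a cycle of implications, using Corollary~\ref{cor:compactness} for the compact direction and Theorem~\ref{thm:universal} for the noncompact direction.

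For (i)$\Rightarrow$(ii), I will invoke the converse half of Corollary~\ref{cor:compactness}. If $\tau$ is compact and $\sigma\in\LF$, the identity $(X,\tau)\to(X,\sigma)$ is a continuous bijection from a compact space onto a Hausdorff space, so $\sigma=\tau$. Since $\tau$ itself belongs to $\LF$ (trivially metrizable, coarser than itself, same trace), we get $\LF=\{\tau\}$. From (ii), each of (iii), (iv), (v) is immediate. The only pair of members is $(\tau,\tau)$, and $\tau$ is a common lower bound in $\LF$ and a Hausdorff lower topology. A one-element poset is a lattice, and it is trivially distributive, which also yields the final sentence of the corollary.

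The substantive direction is to show that each of (iii), (iv), (v) forces compactness. I will first note (v)$\Rightarrow$(iv), since a lattice has binary meets, and these are common lower bounds in $\LF$. Next, (iv)$\Rightarrow$(iii), because every member of $\LF$ is metrizable, hence Hausdorff. It then suffices to prove (iii)$\Rightarrow$(i) by contraposition. If $(X,\tau)$ is noncompact, pick distinct $a,b\in F$, which is possible since $|F|\ge2$. Theorem~\ref{thm:universal} then supplies $\sigma_a,\sigma_b\in\LF$ such that every topology coarser than both is non-Hausdorff. This contradicts (iii).

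I expect no real obstacle, since the heavy lifting was done in Theorem~\ref{thm:universal}. The only point needing care is matching the quantifier in (iii): it asks for a Hausdorff lower topology on $X$ not required to lie in $\LF$. Theorem~\ref{thm:universal} rules out every common lower topology whatsoever, so it is strong enough. Alternatively, one could route (iii)$\Rightarrow$(i) through Theorem~\ref{thm:meet}, using the crossed sequence $(x_n)$.
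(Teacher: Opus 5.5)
Your proposal is correct and follows the paper's proof essentially step for step. It uses the same cycle (i)$\Rightarrow$(ii)$\Rightarrow$(v)$\Rightarrow$(iv)$\Rightarrow$(iii)$\Rightarrow$(i): the compact-to-Hausdorff argument of Corollary~\ref{cor:compactness} gives the first step, and Theorem~\ref{thm:universal} gives the contrapositive of the last. Your explicit check that $\tau\in\LF$, so that $\LF$ is the singleton $\{\tau\}$ rather than empty, fills in a detail the paper leaves implicit.
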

\begin{proof}
The compact-to-Hausdorff argument in Corollary~\ref{cor:compactness}
gives (i)$\Rightarrow$(ii). A singleton is a lattice, so
(ii)$\Rightarrow$(v). In a lattice a meet is a common lower bound,
giving (v)$\Rightarrow$(iv). Every member of $\LF$ is Hausdorff,
so (iv)$\Rightarrow$(iii). Finally, if $(X,\tau)$ were noncompact,
Theorem~\ref{thm:universal} would contradict (iii).
Thus (iii)$\Rightarrow$(i), completing the cycle.
\end{proof}

The hypothesis $|F|\ge2$ is essential: for a one-point exceptional set
the family is a distributive lattice even when $(X,\tau)$ is noncompact,
as recalled in Corollary~\ref{cor:existential}.

\section{Preservation under a finite change}
The preservation consequences in \cite[Proposition~4.5]{JS} extend
to any finite exceptional set, without assuming inclusion between the
topologies. The completeness proof below is the compact-remainder
argument of \cite[Theorems~2.1--2.2]{HJW}, also covering a punctured
subspace that is not dense. These are standard supporting consequences.

\begin{proposition}\label{prop:preservation}
Let $\tau$ and $\sigma$ be metrizable topologies on $X$ and let
$F\subseteq X$ be finite. Suppose
$\tau|_{X\setminus F}=\sigma|_{X\setminus F}$. Then:
\begin{enumerate}
\item the two topologies have the same Borel subsets of $X$;
\item $(X,\tau)$ is completely metrizable if and only if $(X,\sigma)$
      is completely metrizable;
\item $(X,\tau)$ is Polish if and only if $(X,\sigma)$ is Polish.
\end{enumerate}
\end{proposition}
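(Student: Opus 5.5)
The plan is to exploit the fact that $X\setminus F$ is open in both topologies: $F$ is finite, hence closed in every metrizable, in particular $T_1$, topology. Write $Y=X\setminus F$. Then $(Y,\tau|_Y)=(Y,\sigma|_Y)$ is a common open subspace of both $(X,\tau)$ and $(X,\sigma)$, and the two topologies differ only in the neighborhoods of finitely many points. Each of the three assertions will be reduced to a statement about $Y$, together with the trivial behaviour of finite sets.

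For (i), let $B\subseteq X$ be $\tau$-Borel. Then $B\cap Y$ is a Borel subset of the subspace $(Y,\tau|_Y)$. Since $Y$ is $\sigma$-open and $\sigma|_Y=\tau|_Y$, the Borel sets of the subspace $Y$ are the same in both topologies. Moreover, the Borel sets of an open subspace are exactly the ambient Borel sets contained in it, since the trace $\sigma$-algebra of an open (hence Borel) set behaves this way. Hence $B\cap Y$ is $\sigma$-Borel. The set $B\cap F$ is finite and therefore $\sigma$-closed. Thus $B=(B\cap Y)\cup(B\cap F)$ is $\sigma$-Borel, and by symmetry the two Borel $\sigma$-algebras coincide.

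For (ii), I would use the classical theorem that a metrizable space is completely metrizable if and only if it is a $G_\delta$ in (equivalently, is itself) a space admitting a complete metric, together with the facts that open subspaces of completely metrizable spaces are completely metrizable and that complete metrizability is preserved under finite disjoint unions and under adjoining finitely many points appropriately. The direct route runs as follows. If $(X,\tau)$ is completely metrizable, then its open subspace $Y$ is completely metrizable (Alexandroff). For the converse direction in $\sigma$, I would apply the compact-remainder argument referred to in the section heading. If $Y$ is completely metrizable, then $Y$ is a $G_\delta$ in the completion $\widehat X$ of $(X,\sigma)$ with respect to any compatible metric. Since $F$ is finite, hence compact, $X=Y\cup F$ is a $G_\delta$ in $\widehat X$ as well: $F$ is closed and therefore $G_\delta$ in the metric space $\widehat X$, and a union of two $G_\delta$ sets is $G_\delta$. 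By Alexandroff's theorem, $(X,\sigma)$ is completely metrizable. The same argument applies with $\tau$ and $\sigma$ interchanged. The only care needed is that $Y$ need not be dense in $(X,\sigma)$; this causes no trouble because the argument uses the completion of $X$, not of $Y$, and the $G_\delta$ property of $Y$ in $\widehat X$ follows from its complete metrizability regardless of density.

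For (iii), it remains to add separability. $(X,\tau)$ is separable if and only if $Y$ is separable, because $F$ is finite: if $Y$ has a countable dense set $C$, then $C\cup F$ is dense in $X$, since every open set meeting $Y$ meets $C$ and every other nonempty open set contains a point of $F$. Conversely, for metrizable spaces separability is hereditary. The same holds for $\sigma$. Combined with (ii), this gives (iii).

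The step I expect to require the most care is (ii). The key points are to justify that a completely metrizable subspace is $G_\delta$ in any metric space containing it, and to note that adding the finite closed set $F$ preserves the $G_\delta$ property.
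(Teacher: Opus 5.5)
Your proposal is correct and matches the paper's proof essentially step for step. In both, $Y=X\setminus F$ is a common open subspace; for (ii), $(X,\sigma)$ is embedded in the completion of a compatible metric, the completely metrizable $Y$ is $G_\delta$ there, and adjoining the finite (hence $G_\delta$) set $F$ keeps $X$ a $G_\delta$; for (iii), $F$ is added to a countable dense subset of $Y$. The only variation is in (i): you handle arbitrary Borel sets through trace $\sigma$-algebras on the open set $Y$, whereas the paper only checks that each $\tau$-open set is $\sigma$-Borel, which suffices because open sets generate the Borel $\sigma$-algebra. The loose preliminary remarks at the start of your (ii) are not used and can be dropped.
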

\begin{proof}
Put $Y=X\setminus F$. Since $F$ is finite and the spaces are
metrizable, $Y$ is open in both topologies and has the same
subspace topology. If $U\in\tau$, then $U\cap Y$ is $\sigma$-open,
while $U\cap F$ is finite and therefore $\sigma$-Borel.
Hence $U$ is $\sigma$-Borel. By symmetry the Borel structures coincide.

Suppose $(X,\tau)$ is completely metrizable. Then the open subspace
$Y$ is completely metrizable. Embed $(X,\sigma)$ into the completion
$Z$ of any compatible metric. By the standard $G_\delta$
characterization of complete metrizability (see, e.g., \cite{Engelking}),
$Y$ is $G_\delta$ in $Z$. Every singleton of the metric space $Z$
is $G_\delta$, hence the finite set $F$ is $G_\delta$.
Finite unions of $G_\delta$ sets are $G_\delta$, so $X=Y\cup F$
is $G_\delta$ in $Z$ and therefore completely metrizable.
Symmetry gives the converse.

If $(X,\tau)$ is Polish, then $Y$ is separable. A countable dense
subset of $Y$, together with the finite set $F$, is dense in
$(X,\sigma)$. Combining separability with (ii) proves that
$(X,\sigma)$ is Polish. Again the converse is symmetric.
\end{proof}

\section{Further directions}
The results above describe the separation obstruction and characterize
exactly when the full finite-point family is a lattice.
Several problems remain natural.
\begin{enumerate}[label=\textup{(\arabic*)}]
\item \textbf{Local metric preservation.}
Characterize those $\sigma\in\LF$ that admit a compatible metric
$p\le d$ which agrees exactly with a prescribed compatible metric $d$
on a common neighborhood of every point of $X\setminus F$.

\item \textbf{Extremal metrics with several centers.}
Develop a finite-point analogue of the greatest-metric construction
from \cite[Proposition~2.3]{JS}, allowing prescribed upper bounds on
the distances from several exceptional points to selected
closed-discrete sets while keeping the exceptional points mutually separated.

\item \textbf{Infinite exceptional sets.}
Determine which parts of the compatible-gauge representation survive
when $F$ is countably infinite or closed discrete. In particular,
determine whether pairwise compatibility suffices, or whether an
additional collective condition is needed to control the local bases
simultaneously.

\item \textbf{Partial lattice structure.}
Theorem~\ref{thm:meet} identifies exactly when a pair has a meet,
and Corollary~\ref{cor:dichotomy} shows that the full family is a
lattice only in the compact case when $|F|\ge2$.
It is natural to study maximal subfamilies of $\LF$ on which all
finite meets exist, and to determine whether useful distributive or
domain-theoretic structures arise there.
\end{enumerate}

\section*{AI Use Declaration}
The authors declare that GPT-5.6 Sol was used solely for language polishing and editorial assistance. All mathematical ideas, proofs, and results presented in this manuscript are entirely the work of the authors.

\end{document}